\def\bZ{{\bf Z}}
\def\by{{\bf y}}
\def\bY{{\bf Y}}
\def\bV{{\bf V}}
\def\bW{{\bf W}}
\def\bh{{\bf h}}
\def\bH{{\bf H}}
\def\bC{{\bf C}}
\def\bR{{\bf R}}
\def\bI{{\bf I}}
\def\bA{{\bf A}}
\def\bx{{\bf x}}
\def\bLambda{{\bf \Lambda}}
\theoremstyle{plain}\newtheorem{athm}{Theorem}
\theoremstyle{plain}
\begin{document}

\title{On the Shift Operator, Graph Frequency and Optimal Filtering in Graph Signal Processing}
\author{Adnan Gavili and Xiao-Ping Zhang, ~\IEEEmembership{Senior~member,~IEEE}
\thanks{This work was supported in part by the Natural Sciences and Engineering Research Council of Canada (NSERC), Grant No. RGPIN239031.}
\thanks{%
The authors are with the Department of Electrical and Computer Engineering,
Ryerson University, 350 Victoria Street, Toronto, Ontario, Canada
M5B 2K3. E-mail: adnan.gavili@ryerson.ca and xzhang@ee.ryerson.ca.
Xiao-Ping Zhang is the corresponding author.}
\thanks{Version: \today} 
}

\maketitle

\begin{abstract}
Defining a sound shift operator for signals existing on a certain graph structure, similar to the well-defined shift operator in classical signal processing, is a crucial problem in graph signal processing, since almost all operations, such as filtering, transformation, prediction, are directly related to the graph shift operator. We define a set of energy-preserving shift operators that satisfy many properties similar to their counterparts in classical signal processing. Our definition of the graph shift operator negates the shift operators defined in the literature, such as the graph adjacency matrix and Laplacian matrix based shift operators, which modify the energy of a graph signal. We decouple the graph structure represented by eigengraphs and the eigenvalues of the adjacency matrix or the Laplacian matrix. We show that the adjacency matrix of a graph is indeed a linear shift invariant (LSI) graph filter with respect to the defined shift operator. We introduce graph finite impulse response (GFIR) and graph infinite impulse response (GIIR) filters and obtain explicit forms for such filters. We further define autocorrelation and cross-correlation functions of signals on the graph, enabling us to obtain the solution to the optimal filtering on graphs, i.e., the corresponding Wiener filtering on graphs and the efficient spectra analysis and frequency domain filtering in parallel with those in classical signal processing. This new shift operator based GSP framework enables the signal analysis along a correlation structure defined by a graph shift manifold as opposed to classical signal processing operating on the assumption of the correlation structure with a linear time shift manifold.  We further provide the solution to the optimal linear predictor problem over general graphs. Several illustrative simulations are presented to validate the performance of the designed optimal LSI filters.
\end{abstract}

\begin{IEEEkeywords}
Graph signal processing, graph shift operator, graph Fourier transform, graph correlation function, graph spectral analysis, optimal filtering on graph
\end{IEEEkeywords}

\section{Introduction} \label{IntroSection}
Graph signal processing (GSP) is an emerging field, focusing on representing signals as evolving entities on graphs and analyzing the signals based on the structure of the graph \cite{Ortega,Moura1,Moura2,Moura3}. The temporally evolving measured data from variety of sources in a network, such as the measured data from sensors in wireless sensor networks, body area sensor networks, transportation networks and weather networks, are compatible with signal representation on certain graphs. For instance, a network of sensors implanted in a human body to measure the temperatures of different tissues can be viewed as a graph in which the sensor nodes are the graph nodes and the graph structure shows the connection between the sensor nodes. Moreover, the measured temperatures by the nodes are the signals existing on the corresponding graph. Hence, GSP can be a powerful tool for analyzing and interpreting such signals existing on graphs. 

Classical signal processing has provided a wide range of tools to analyze, transform and reconstruct signals regardless of the true nature of the signals evolution. Indeed, classical signal processing may not provide an effective way to represent and analyze the signals that exist on a graph structure. GSP is an attempt to develop a universal tool to process signals on graphs. More specifically, GSP benefits from algebraic and graph theoretic concepts, such as graph spectrum and graph connectivity, to analyze structured data \cite{Ortega,Puschel1,Puschel2}.

Two major approaches have been developed for signal processing on graphs. The first approach is to use the graph Laplacian matrix as the underlying building block for the definitions and tools in GSP \cite{Ortega}. The second approach is to use the adjacency matrix of the underlying graph as the shift operator on graph \cite{Moura1,Moura2,Moura3}. Both approaches define fundamental signal processing concepts on graphs, such as filtering, transformation, downsampling.

Graph wavelet transforms are discussed in \cite{Leonardi,COIFMAN200653,HAMMOND2011129}. 
The idea of graph filter banks is developed in \cite{Ortega2} with the design of critically-sampled wavelet-filter banks on graphs. 
Authors in \cite{Ekambaram2} introduce two-channel (low-pass and high-pass), critically-sampled, perfect-reconstruction filterbanks for signals defined on circulant graphs. 
The authors in \cite{Kotzagiannidis} extend the framework of sampling and reconstructing signals with a finite rate of innovation (FRI) to the graph domain.
Authors in \cite{Kotzagiannidis2} present novel families of wavelets and associated filterbanks for the analysis and representation of functions defined on circulant graphs and generalize to arbitrary graphs in the form of graph approximations. In \cite{Ekambaram}, the authors present a method to decompose an arbitrary graph or filter into a combination of circulant structures. 
In \cite{XDong,XDong2,loh2013}, the authors focus on recovering the graph structure, i.e., the graph adjacency matrix, via formulating a design problem. The obtained graph structure can then be used to obtain the graph Fourier basis, the new graph shift operator and graph filters. In this paper, we assume that the graph structure is already obtained, e.g., using any of these approaches, and we aim to define a graph shift operator that satisfies certain properties.

When the structure of a graph is known, the common effort in GSP is to define a \textit{shift operator} on the graph and then introduce the concepts of filtering, transformation, denoising, prediction, compression and other operations similar to the conventional counterparts in classical signal processing, based on the shift operator. 
It is defined in \cite{Ortega} as the translation on graph via generalized convolution with a delta centered at vertex $n$. In \cite{Moura1}, the graph shift operator is the adjacency matrix of the graph and simple justification of such a choice is presented. However, none of these operators satisfy the energy-preserving property similar to their counterpart in classical signal processing. More specifically, applying the shift operator in \cite{Ortega,Moura1} to a graph signal several times will change the energy content of the graph signal and its frequency components, making it difficult to justify and design the filter frequency response as in classical signal processing. An isometric shift operator has recently been introduced in \cite{GraphTranslation,StationaryGraphSignal,GiraultThesis}, which satisfies the energy-preserving property. This shift operator is a matrix whose eigenvalues are derived from the graph Laplacian matrix. The limitation of this approach is that its phase shifts are structure-dependent and do not satisfy some other desired properties leading to computationally efficient spectral analysis.

Motivated by the graph shift matrix defined in \cite{Moura1}, but fundamentally different, we introduce a unique set of graph shift operators that satisfy the properties of the shift operator in classical signal processing. The new shift operator preserves the energy content of the graph signal in the frequency domain. We essentially decompose the graph adjacency matrix (the Laplacian matrix can be handled the same way) into two parts. The first part is the graph structure part represented by eigengraphs associated with frequency components of a graph. The second part is the filtering part represented by the eigenvalues of the adjacency matrix, which changes the amplitude of the frequency components. The eigenvalues of the new shift operators therefore only represent phase shift of frequency components that can be flexibly constructed. A special construction of these phase shift eigenvalues with nice properties is also given. 

We then elaborate on the structure of \textit{linear shift invariant} (LSI) graph filters and show that any adjacency matrix can indeed be written as an LSI graph filter using the presented new shift operator. Furthermore, we define the \emph{graph finite impulse response} (GFIR) and \emph{graph infinite impulse filters} (GIIR), similar to the classical signal processing counterparts, and obtain an explicit form for such filters. Based on the defined shift operator, we introduce autocorrelation and cross-correlation functions on graph. We then formulate the optimal filtering and spectrum analysis on graph, i.e., the corresponding Wiener-Hopf equation and \emph{Wiener} filtering on graphs, and obtain the structure of such filters for any arbitrary graph structure. We finally elaborate on the best linear predictor graph filters and provide several illustrative simulation setups to verify the performance improvements of optimal filtering using our new graph shift operator.

The contribution of this paper can be summarized as follows:

	\begin{itemize}
		\item We define a general set of graph shift operators that satisfy the energy-preserving property in the frequency domain and other properties in classical signal processing. These shift operators only change the phase of frequency components. Especially, we design a specific shift operator with the desired periodicity property as in classical signal processing. The shift operation can then be considered as discrete-time lossless information flowing structure on a graph.
		
		\item For a given graph, we construct a set of \emph{eigengraphs} that represent basic correlation structures of a graph frequency component. When applied on any graph signal, each eigengraph is a projection operator that projects the signal to a single graph frequency component subject to only a phase shift. The new shift operator is a linear combination of eigengraphs. \label{eigengraph1}

		\item We investigate the properties of the presented shift operator for \emph{linear shift invariant filtering} and show that the adjacency matrix is indeed a LSI filter based on our new graph shift operator. 
		
		\item We define autocorrelation and cross-correlation functions of a signal on graph. We then obtain a closed-form solution to the \emph{Wiener} filtering problem and show that it has efficient power-spectrum representation in certain graphs similar to classical signal processing. Such a power spectral analysis can only be obtained using our new shift operator. This new shift operator based GSP framework enables the signal analysis along a correlation structure defined by a graph shift manifold as opposed to classical signal processing operating on the assumption of the correlation structure with a linear time shift manifold.  
	\end{itemize}

The paper is organized as follows. In section II, we discuss the basics of GSP and present a new set of shift operators. Section III introduces graph filters and Fourier transforms based on the new shift operator.  We derive the optimal LSI graph filters in section IV. Section V presents the simulations and section VI concludes of the paper.

\emph{Notations:}  Matrices and vectors are represented by uppercase and lowercase boldface letters, respectively. Transpose and Hermitian (conjugate transpose) operations are represented by $(\cdot)^T$ and $(\cdot)^H$, respectively. The notation $\bI$ stands for the identity matrix, and $\circledast$ and $\ast$ are the circular and aperiodic convolution operators, respectively.

 \section{A New Set of Shift Operators and Graph Frequency Components}

\subsection{Signals on Graph}\label{sec-siggraph}

Consider a dataset with $N$ distinct elements, where some information regarding the relations between data elements is available. One can represent such a dataset and the corresponding relational information as a graph. A graph can be denoted by a $G=\{ \mathcal{V},\bf{A} \}$, where $\mathcal{V}=\{ \nu_0, \cdots \nu_{N-1} \}$ is the set of all vertices of the graph, representing the elements in the dataset, and $\bf{A}$ is the weighted adjacency matrix that represents the relation between nodes. More specifically, if there is a relation between nodes $\nu_n$ and $\nu_m$, then $a_{n,m}={\bf{A}}(n,m)\neq0$, otherwise $a_{n,m}=0$. We note that the elements of the adjacency matrix $\bf{A}$ is not restricted to a specific set of values. In this paper, we consider a general graph with real-valued adjacency matrix $\bA$, either directed or undirected, and assume that data elements take complex scalar values. We define a \emph{graph signal} as a one-to-one mapping from the set of all vertices to the set of complex numbers:
\begin{align}
{\bf x}\; : \; &\mathcal{V}\;\rightarrow \;\mathbb{C} , \nu_n \rightarrow x_n.
\end{align} 
Without loss of generality, we represent a graph signal as a vector whose elements are complex numbers assigned to the nodes, ${\bf x}=[x_1, \cdots , x_N]^T$, where $T$ stands for the transpose operator.

As a special case, a directed cyclic graph is shown in Fig.~\ref{fig:timeseries}. 
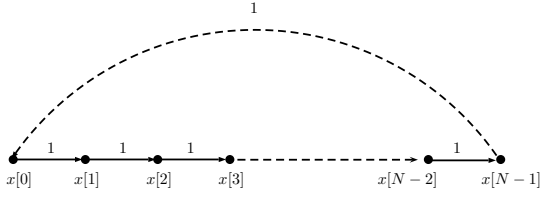
\begin{figure} 
\begin{center}
\scalebox{0.6} 
{
\begin{pspicture}(1,2.976533)(13.132351,6.5438094)
\psdots[dotsize=0.2](2.7619047,3.647619)
\psdots[dotsize=0.2](4.3619046,3.647619)
\psdots[dotsize=0.2](5.9619045,3.647619)
\psdots[dotsize=0.2](11.9619055,3.647619)
\psarc[linewidth=0.04,linestyle=dashed,dash=0.17638889cm 0.10583334cm,arrowsize=0.05291667cm 1.6,arrowlength=1.4,arrowinset=0.0]{->}(6.5238094,0.0){6.5238094}{34.938507}{145.84827}
\psdots[dotsize=0.2](1.1619047,3.647619)
\psdots[dotsize=0.2](10.361904,3.647619)
\psline[linewidth=0.04,linestyle=dashed,dash=0.17638889cm 0.10583334cm,arrowsize=0.05291667cm 2.0,arrowlength=1.4,arrowinset=0.4]{->}(6.1295385,3.6438093)(10.129539,3.6438093)
\usefont{T1}{ptm}{m}{n}
\rput(1.3,3.1980953){$x[0]$}
\usefont{T1}{ptm}{m}{n}
\rput(2,3.9){$1$}
\usefont{T1}{ptm}{m}{n}
\rput(2.8,3.1980953){$x[1]$}
\usefont{T1}{ptm}{m}{n}
\rput(3.6,3.9){$1$}
\usefont{T1}{ptm}{m}{n}
\rput(4.4,3.1980953){$x[2]$}
\usefont{T1}{ptm}{m}{n}
\rput(5.1,3.9){$1$}
\usefont{T1}{ptm}{m}{n}
\rput(6,3.1980953){$x[3]$}
\usefont{T1}{ptm}{m}{n}
\rput(11,3.9){$1$}
\usefont{T1}{ptm}{m}{n}
\rput(9.9,3.1980953){$x[N-2]$}
\usefont{T1}{ptm}{m}{n}
\rput(6.5,7){$1$}
\usefont{T1}{ptm}{m}{n}
\rput(12.2,3.1980953){$x[N-1]$}
\psline[linewidth=0.04,arrowsize=0.05291667cm 1.6,arrowlength=1.4,arrowinset=0.0]{->}(1.220852,3.6497245)(2.6945362,3.6497245)
\psline[linewidth=0.04,arrowsize=0.05291667cm 1.6,arrowlength=1.4,arrowinset=0.0]{->}(2.8450625,3.6497245)(4.3187466,3.6497245)
\psline[linewidth=0.04,arrowsize=0.05291667cm 1.6,arrowlength=1.4,arrowinset=0.0]{->}(10.38401,3.6297245)(11.857694,3.6297245)
\psline[linewidth=0.04,arrowsize=0.05291667cm 1.6,arrowlength=1.4,arrowinset=0.0]{->}(4.4050627,3.6497245)(5.878747,3.6497245)
\end{pspicture} 
}
\caption{Graph representation of time series periodic data.}
    \label{fig:timeseries}
\end{center}
\end{figure}
Such a graph is compatible with the graph representation of a periodic time series signal, $x[n]=x[n+N]$, with $N$ signal points, i.e., one can assign a node to each signal point and the relation between the signal points is the time shift. One can easily show that the graph adjacency matrix for the directed cyclic graph  is given by

\begin{equation}\label{Circulant}
{{\bf A}}={\bf C} = 
 \begin{pmatrix}
  0 & 0 & \cdots & 1 \\
  1 & 0 & \cdots & 0 \\
  \vdots  & \vdots  & \ddots & \vdots  \\
  0 &  \cdots & 1 & 0 
 \end{pmatrix}.
\end{equation}
There are two fundamental components in GSP: the signals represented by the values on vertices, and the signal correlation structure represented by the connections between vertices.

\subsection{Graph Shift Operator, Information Flow and Filtering} \label{sec-gso}

A graph shift operator allows us to define the notion of information flow over a graph. Indeed, it represents one elementary discrete step on how the information propagates (shifts) from one node to its neighbors. In classical signal processing, i.e., the case where the graph structure is a cyclic graph, the information flow is unidirectional, i.e., from each node to only its next neighbor. In a more complicated graph structure, the information flow will neither be restricted to unidirectional structure nor to a limited number of physical neighbors but depend on the graph adjacency matrix. Therefore, the notion of shift operator on graph must be clarified.

\subsubsection{Graph shift}

In \cite{Moura2}, the notion of shift operator is defined as \emph{a local operation that replaces a signal value at each node of a graph with the linear combination of the signal values at the neighbors of that node}. Shift operator is a fundamental element in digital signal processing. Specifically, for a shift operator $\bf \Phi$ and a graph signal $\bf x$, the one-step shifted version of the graph signal, which is a new graph signal, is $\bf \Phi x$. And the $n$-step shifted version of the signal is ${\bf \Phi}^n x$.

\subsubsection{Graph signal state change}
	
A graph shift operator is a linear operator such that when it applies to a graph signal ${\bf x}_n$ at state (or time) $n$, it changes the graph signal into a new graph signal ${\bf x}_{n+1}$ at step $n+1$. We use the index $n$ to show the state of the graph signal. Equivalently, when a graph shift operator is applied to a graph signal, the state of the original graph signal is shifted to a new state by one unit of shift. 

Note that this is similar to time series analysis, when a signal $x(t)$ is shifted in time by a certain amount $T$, $y(t)=x(t-T)$, the signal $x(\cdot)$ at time-state $t-T$ will be mapped to $y(\cdot)$ at time-state $t$. The state of the signal at a certain time stamp is updated by the shift operator. Similarly, we defined the $(n+1)$-th state of the vector of the graph signal, i.e., ${\bf x}_n$, as the $n$-th shifted version of ${\bf x}_0$. For instance, when the graph shift operator applies to a graph signal at state $n$, i.e., ${\bf x}_n$, and it changes the graph signal to ${\bf x}_{n+1}$. More specifically, ${\bf x}_{n+1}={\bf \Phi}{\bf x}_n$.

Also note that in time series analysis, a time shift corresponds to a local shift in the cyclic graph. For a general graph shift, there is no such straightforward relationship.

\subsubsection{Graph signal filtering}

A linear filtering is defined by a matrix operation on the graph signal such that the result is also a graph signal. If we define the filter matrix as $\bf H$, the filtered graph signal can be written as $\bf H x$. We will show later that if the filtering operation also satisfies the shift invariance property, the filter can be written as a polynomial of the new graph shift operator and the filter operation is indeed a modification of the amplitudes of existing signal frequency components, as in classical signal processing.

\subsection{The New Graph Shift Operator}\label{sec-newgso}

We now define a set of \emph{energy-preserving shift operators} for an arbitrary graph structure. 

${\bf Definition}$ : Given the adjacency matrix ${\bf A}$ for an arbitrary graph, assume that it is diagonalizable and its eigen decomposition is ${\bf A}={\bf V}{\boldsymbol \Lambda}{\bf V}^{-1} = \sum_{i=1}^N\lambda_i{\bf v}_i \tilde{\bf v}_i^T=\sum_{i=1}^N\lambda_i \hat{\bf V}_i$, where ${\bf V}=[{\bf v}_1 \; {\bf v}_2 \; \cdots \; {\bf v}_N]$ and $({\bf V}^{-1})^T=[\tilde{\bf v}_1 \; \tilde{\bf v}_2 \; \cdots \; \tilde{\bf v}_N]$, and ${\bf v}_i$ and $\tilde{\bf v}_i$ are $N \times 1$ column vectors of ${\bf V}$ and $({\bf V}^{-1})^T$, respectively. We define the matrix ${\bf A}_{\phi}={\bf V}{\boldsymbol \Lambda}_\phi{\bf V}^{-1}=\sum_{i=1}^N\lambda_{\phi_i} \hat{\bf V}_i$ to be the shift operator with
\begin{equation}\label{eq-lambda_phi}
{\boldsymbol \Lambda}_{\phi}=\text{diag}(\lambda_{\phi_1},\lambda_{\phi_2},\cdots,\lambda_{\phi_N}),
\end{equation}
where $\lambda_{\phi_k}=e^{j\phi_k}$, $\phi_k$ is an arbitrary phase in $[0,2 \pi]$ where $\phi_k\neq \phi_l$ for $k\neq l$, $|{\boldsymbol \Lambda}_\phi|={\bf I}$, $|\cdot|$ is defined as the point-wise absolute value operator. Thus,
\begin{align}
{\bf A}={\bf V}{\boldsymbol \Lambda}{\bf V}^{-1}&={\bf V}{\boldsymbol \Lambda}_{h}{\boldsymbol \Lambda}_{\phi}{\bf V}^{-1} \nonumber \\  
&={\bf V}{\boldsymbol \Lambda}_h{\bf V}^{-1}{\bf V}{\boldsymbol \Lambda}_{\phi}{\bf V}^{-1} \nonumber \\
&={\bf A}_h{\bf A}_{\phi}={\bf A}_{\phi}{\bf A}_h,
\end{align}
 where ${\boldsymbol \Lambda}_h={\boldsymbol \Lambda}{\boldsymbol \Lambda}_{\phi}^{-1}$ and ${\bf A}_{h}={\bf V}{\boldsymbol \Lambda}_{h}{\bf V}^{-1}$.
 In essence, the shift operator ${\bf A}_{\phi}$ preserves all the eigenvectors of the adjacency matrix ${\bf A}$, but replaces all the eigenvalues of  ${\bf A}$ with pure phase shifts.
 
$\bf Definition$: We further define a special new shift operator as 
\begin{equation}\label{eq-Ae}
{\bf A}_e={\bf V}{\boldsymbol \Lambda}_e{\bf V}^{-1}, \lambda_{e_k}\lambda_{e_l}^*=e^{-j\frac{2\pi (k-l)}{N}}, \forall k,l=1,\cdots,N,
\end{equation}
where ${\boldsymbol \Lambda}_{e}=\text{diag}(\lambda_{e_1},\lambda_{e_2},\cdots,\lambda_{e_N})$.
One can write
\begin{align}\label{LambdaEquation} 
\lambda_{e_k}=e^{j(\phi_{\text{const}}+\frac{-2 \pi (k-1)}{N})},
\end{align}
where $\phi_{\text{const}}$ can be any arbitrary constant phase shift. Without loss of generality, we will assume $\phi_{\text{const}}=0$ in the rest of this paper. The shift operator ${\bf A}_e$ and ${\bf A}_\phi$ satisfies the following properties:

\emph{Property 1}: $\| ({\bf A}_\phi^k {\bf x})_{\cal F}\|^2  = \left\|  {\bf x}_{\cal F}\right\|^2 $, where ${\bf x}_{\cal F}$ is frequency representation of graph signal defined by ${\bf x}_{\cal F}={\bf V}^{-1}{\bf x}$.

\begin{proof}
 Note that $k$-th shifted version of the graph signal in the Fourier domain is 
\begin{align}\label{eq-p1a}
({\bf A}_{\phi}^k{\bf x})_{\cal F}={\bf V}^{-1}{\bf A}_{\phi}^k{\bf x}={\bf V}^{-1}({\bf V}{\boldsymbol \Lambda}_{\phi}^k{\bf V}^{-1}){\bf x}={\boldsymbol \Lambda}_{\phi}^k{\bf x}_{\cal F}.
\end{align}
Thus its energy
\begin{align}\label{eq-p1b}
\|({\bf A}_\phi^k {\bf x})_{\cal F}\|_2^2=||{\boldsymbol \Lambda}_{\phi}^k{\bf x}_{\cal F}||_2^2=({\boldsymbol \Lambda}_{\phi}^k{\bf x}_{\cal F})^H{\boldsymbol \Lambda}_{\phi}^k{\bf x}_{\cal F}=\left\|  {\bf x}_{\cal F}\right\|^2,
\end{align}
i.e., the energy of the graph signal in the frequency domain for any amount of shift is constant. 
\end{proof}

Note that for a unitary graph Fourier operator, ${\bf V}^{-1} = \bV^H$, then $\left\|  {\bf x}_{\cal F}\right\|^2 = \left\|  {\bf x}\right\|^2$. The new graph operator $\bA_\phi$ of an undirected graph has such property since $\bA$ is symmetric. We will further discuss the general graph Fourier transform and its energy-preserving in next subsection.

\emph{Property 2}: ${\bA}_e^N \bx = \bx$.

The first property is energy-preserving. The second property specific for $\bA_e$ is consistent with classic signal processing for an important phase shift property of the shift operator. 

\label{property1}
Property 1 is of great importance in frequency domain graph signals filtering. From \eqref{eq-p1a} and \eqref{eq-p1b}, it can be seen that if the modulus of the eigenvalues of the shift operator is not 1, the frequency components with small eigenvalues will disappear after several shifts. Indeed, only the frequency component with the largest eigenvalue will remain after many shifts. This is apparently undesirable. Property 1 implies that the new energy-preserving graph shift operator will preserve the energy of all frequency components. Moreover, as will be discussed later in this paper, linear shift invariant graph filter is defined by a polynomial of the graph shift operator, i.e., ${\bf H}=\sum_{k=0}^{L-1}h_k{{\bf A}_{e}}^k$, 
where the following properties hold true:

- Shift operator does not change the energy of the signal in the frequency domain. It only changes the phases of its frequency components

- Filter coefficients can modify the energy contents of the graph signal in the frequency domain.

We will see additional important property of ${\bA}_e$ in filtering and spectral analysis in later sections.

We note that our definition of the graph shift operator brings us the benefit to express the filtering operations in a more compact and meaningful form, similar to their counterparts in classical signal processing. For the choice of $\lambda_{e_k}=e^{-j \frac{2 \pi (k-1)}{N}}$, the shift operator ${\bf A}_e$ may not be sparse. Also ${\bf A}_e(i,j)$ may not be real-valued. Therefore, a large memory may be needed to save the corresponding operator and conduct the filtering operation in the shift domain. We note that such large memory may not be necessary if the filtering operation is conducted in the Fourier domain. We will also show that an LSI filter with a non-sparse shift operator may be represented by a polynomial of a sparse graph operator and thus has efficient shift domain implementation. \label{ExtraExplanationOfShift}

{\emph{Remark:}} Most of existing shift operators in the literature do not satisfy the energy-preserving property. For instance, in \cite{Moura1}, the graph shift operator is the adjacency matrix ${\bf A}$ of the graph. When such a shift operator applies to a graph signal, the energy content of the graph signal changes. To show this, note that ${\bf A}={\bf V}{\boldsymbol \Lambda}{\bf V}^{-1}$. Applying the graph shift operator $n$ times to the graph signal $\bf x$ results in ${\bf x}_n={\bf A}^n{\bf x}={\bf V}{\boldsymbol \Lambda}^n{\bf V}^{-1}{\bf x}$. Since the magnitude of the diagonal elements of ${\boldsymbol \Lambda}$ in general are not equal to 1, as $n$ becomes larger, some of the eigenvalues of ${\boldsymbol \Lambda}^n$ grow exponentially and the other eigenvalues approach zero. This means that the energy content of the signal is not preserved.

We also note that there exist other definitions of the graph shift operator in the literature such as $\frac{1}{\lambda_{\max}({\bf A})}{\bf A}$ as the \emph{normalized graph shift matrix} \cite{Moura2} where $\lambda_{\max}({\bf A})$ is the maximum eigenvalue of $\bf A$ or Laplacian matrix based shift operators \cite{Ortega,Segarra2016}.
Not only do these shift operators not preserve the energy, but also they actually filter the signals in that they modify the relative strength of different eigenvectors (frequency components). 
In \cite{Ortega}, the translation on graph is defined via generalized convolution with a delta centered at vertex $n$.  However, this translation operator aims to produce a geometrically localized shift in the vertex domain and does not preserve the energy. 
In \cite{GraphTranslation,GiraultThesis}, a new isometric shift operator has recently been introduced that satisfies the energy-preserving property with a similar general expression. It is indeed a special case of ${\bf A}_{\phi}$. Its eigenvalues are derived from the eigenvalue of the graph Laplacian matrix. Note that in our definition of graph shift operator, the eigenvalues (phase shifts) are detached from the eigenvalues of the graph adjacency matrix or Laplacian matrix and therefore are more flexible to accommodate other properties such as property 2 above.  We will further show that our new shift operators have properties leading to computationally efficient spectral analysis and filtering through the detailed formulations of graph Fourier analysis in the next subsection.

\subsection{Frequency Content of Graphs, Eigengraphs and Graph Fourier Basis}\label{sec-gft}

Consider the graph adjacency matrix ${\bf A}$ and its eigenvalue decomposition as ${\bf A}={\bf V}{\boldsymbol \Lambda}{\bf V}^{-1}$, where ${\boldsymbol \Lambda}$ is a diagonal matrix whose $i$-th diagonal element is the $i$-th eigenvalue of ${\bf A}$. Note that in this paper, we will always assume that $\bA$ is diagonalizable and eigenspaces have dimension equal to one for simplicity.

In graph theory, the eigenvalues of the graph adjacency matrix are called the spectrum of the graph \cite{GraphBook}. 

\subsubsection{Graph frequency content} \label{subsec-gfc}
Defining ${\bf V}=[{\bf v}_1 \; {\bf v}_2 \; \cdots \; {\bf v}_N]$ and $({\bf V}^{-1})^T=[\tilde{\bf v}_1 \; \tilde{\bf v}_2 \; \cdots \; \tilde{\bf v}_N]$, where ${\bf v}_i$ and $\tilde{\bf v}_i$ are $N \times 1$ column vectors of ${\bf V}$ and $({\bf V}^{-1})^T$, respectively, one can show that
\begin{align}\label{EigenGraph}
 {\bf A}={\bf V}{\boldsymbol \Lambda}{\bf V}^{-1}=\sum_{i=1}^N\lambda_i{\bf v}_i \tilde{\bf v}_i^T=\sum_{i=1}^N\lambda_i \hat{\bf V}_i.
\end{align}
 The rank one matrix $\hat{\bf V}_i={\bf v}_i \tilde{\bf v}_i^T$ is called the $i$-th \emph{eigengraph}, and ${\bf v}_i$ is the  $i$-th frequency component, of ${\bf A}$. Moreover, if none of the elements of ${\bf v}_i $ and $ \tilde{\bf v}_i$ are zero, the corresponding eigengraph is a complete graph, meaning that all nodes are connected to each other. However, the original graph that is a linear combination of the eigengraphs, stated in \eqref{EigenGraph}, may not be complete.

\emph{Remark}: The eigengraphs of the graph shift operator ${\bf A}_{\phi}$ are the same as those of the adjacency matrix ${\bf A}$ by definition. 

\subsubsection{Eigengraph structure}\label{sec-eigenstructure}

To elaborate more on eigengraph structures, let us define ${\bf v}_i\triangleq[v_{i1} \; v_{i2} \; \cdots \; v_{iN}]^T$ and $\tilde{\bf v}_i\triangleq[\tilde{v}_{i1} \; \tilde{v}_{i2} \; \cdots \; \tilde{v}_{iN}]^T$.
The corresponding $i$-th eigengraph is given by the rank one matrix
\begin{align}
 \hat{\bf V}_i={\bf v}_i \tilde{\bf v}_i^T=\begin{pmatrix} v_{i1}\tilde{v}_{i1}&v_{i1}\tilde{v}_{i2}& \cdots & v_{i1}\tilde{v}_{iN} \\ v_{i2}\tilde{v}_{i1}&v_{i2}\tilde{v}_{i2} & \cdots &v_{i2}\tilde{v}_{iN} \\ \vdots & \vdots & \ddots & \vdots \\
v_{iN}\tilde{v}_{i1} & v_{iN}\tilde{v}_{i2} & \cdots & v_{iN}\tilde{v}_{iN}
\end{pmatrix},
\end{align}
where $\hat{\bf V}_i(l,m)=v_{il}\tilde{v}_{im}$. The adjacency matrix of an eigengraph can be viewed as a signal/information transition matrix, where the weight $v_{il}\tilde{v}_{im}$ is the transition weight from node $l$ to node $m$. For instance, the eigengraph of a three node graph and the transition (bipartite) graph is shown in Fig.~\ref{fig:SimpleExample}(a) and Fig.~\ref{fig:SimpleExample}(b). A more general $N$ node eigengraph is shown in Fig.~\ref{fig:SimpleExample}(c). Note that for the $i$-th rank one eigengraph, the outgoing weight of node $l$ is $v_{il}$ and the incoming weight of node $m$ is $\tilde v_{im}$, see Fig.~\ref{fig:SimpleExample}(b). and Fig.~\ref{fig:SimpleExample}(c). We note that in these figures, $w_{lm}^i= {\bf v}_{im}\tilde{\bf v}_{il}$ is the signal transition weight from node $l$ to node $m$. \label{FigureExplanation} 

Note that an eigengraph $\hat{\bf V}_i$ is a special graph such that $\bA \hat{\bV}_i = \lambda_i \hat{\bf V}_i$. Although an eigengraph is generally a complete graph, a linear combination of the eigengraphs may not be complete, as is evident for the cyclic graph.

\subsubsection{Graph Fourier basis and Graph Fourier transform (GFT)}\label{sec-subGFT}

We refer to $\mathcal{F}={\bf V}^{-1}$ as the graph Fourier transform (GFT) operator since its rows, span a basis to represent the graph signal. The Fourier transform of a graph signal $\bx$ is $\bx_\mathcal{F}={\bf V}^{-1}\bx$. Thus $\mathcal{F}^{-1}={\bf V}$ is the inverse graph Fourier transform (IGFT) operator.

Note that the rows of $\mathcal{F}={\bf V}^{-1}$ are not orthogonal (unitary) for a general shape graph. However, one can easily verify that the vector space $ \text{Span}_k \{ \tilde{\bf v}_k\}$ of the columns of $({\bf V}^{-1})^T$ and the vector space $ \text{Span}_k \{ {\bf v}_k\} $ of  the columns of $\mathcal{F}^{-1}={\bf V}$ construct a biorthogonal basis, i.e., $\tilde{\bf v}_l^T {\bf v}_m=\delta_{l-m}$. We further note that the corresponding eigengraph of the $i$-th frequency component is constructed by a pair of $\tilde{\bf v}_i, \; {\bf v}_i$ meaning that they are constructed by the Fourier basis of the graph. This interpretation also confirms that the eigengraphs are the graph structures of graph frequency components, in which one can decompose a graph signal that is generated by the same graph structure, on those Fourier bases without any loss.

For biorthogonal GFT, we further define a dual GFT: $\mathcal{\tilde F}={\bf V}^{H}$ and an inverse dual GFT $\mathcal{\tilde F}^{-1}={\bf V}^{-H}$. As such, we have the inner product preservation: $\langle \bf x, \bf y \rangle = \langle {\bf \tilde{x}}_\mathcal{F}, {\bf y}_\mathcal{F} \rangle $, where ${\bf \tilde{x}}_\mathcal{F}= \mathcal{\tilde {F}}{\bf x} = {\bf V}^{H}{\bf x}$.

Note that such biorthogonal transform satisfies the frame theory \cite{FrameTheory}. More specifically, the energy of a graph signal in the Fourier domain, i.e., $\| {\bf x}_{\cal F} \|^2= ||{\bf V}^{-1}{\bf x}||_2^2$, is bounded by \begin{equation}\label{eq-frame1}
\alpha \|{\bf x}\|^2 \leq \| {\bf x}_{\cal F} \|^2 \leq \beta \|{\bf x}\|^2, 
\end{equation}
where $\alpha=\frac{1}{||{\bf V}^{-1}||_2^2}$ and $\beta=||{\bf V}^{-1}||_2^2$. In other words, 
\begin{equation}
\frac{1}{\beta} \| {\bf x}_{\cal F} \|^2 \leq \| {\bf x} \|^2 \leq \frac{1}{\alpha} \| {\bf x}_{\cal F}\|^2. 
\end{equation}
Furthermore, for unitary transform operator, i.e., ${\bf V}^{-1}={\bf V}^H$, $\alpha=\beta=1$. Therefore, such a transform preserves the energy in both shift and transform domains. The graph shift operator $\bA_\phi$ of an undirected graph has such property since its adjacency matrix $\bA$ is symmetric and the corresponding $\bV$ is unitary operator. Therefore $\bA_\phi$ for an undirected graph is a unitary operator by construction.

The shift operator $\bA_\phi$ is a linear combination of eigengraphs. It may not be a local (sparse) operator, meaning that most entries of this matrix may be non-zero. This means that the complexity of applying $\bA_\phi$ to a graph signal of size $N$ is of order of $O(N^2)$. However, once the signal is transformed to the Fourier domain, several other operations such as filtering are computationally efficient, as will be discussed later in section \ref{sec-graphwinerfilter}. Also, we will show in Theorem~\ref{TheoremReversePolynomial} that ${\bf A}_\phi$ can be represented as a polynomial of the adjacency matrix $\bA$ in certain condition and thus has efficient local implementation.

\emph{Remark:} 
For a) the adjacency matrix of the undirected graph, b) the combinatorial graph Laplacian, and c) the normalized Laplacian matrix, the GFT matrix V is unitary and the GFT becomes orthogonal.
\label{para-A-remark}


\subsubsection{Linear operator, projection operator and graph shift operator}\label{subsec-linearoperator} 
 We note that a linear operator on a graph signal can be defined as $\bf L$ such that if it applies to a graph signal ${\bf x}$, the result is also a graph signal $\bf y$ in which $\bf y=Lx$. A projection operator $\bf W$ satisfies
\begin{align}
	{\bf W}^k {\bf x}={\bf W} {\bf x}, \;\; \text{for all $k \in N$}.
\end{align}
It can be shown that, eigengraph operator, i.e., $\hat{\bf V}_i$, satisfy this property and thus a projection operator. The eigengraph operator $\hat{\bf V}_i$ represents the $i$-th basis for decomposition of the graph adjacency matrix $\bf A$. It means that, a graph structure, i.e., the graph adjacency matrix, is composed of a linear combination of $n$ independent eigengraphs (as we assume that all eigenvalues of $\bf A$ are distinct, thus eigenvectors are linearly independent). Moreover, applying an eigengraph operator to a graph signal will select the corresponding frequency component of the graph signal. This operation is in accordance with the classical signal processing interpretation of a \emph{filter operation}. More specifically, if a frequency selective filter applies to a signal several times, it returns the same frequency components of the signal similar to the case where the operator applies once. By defining the $\hat{\bf V}_i$ as the $i$-th frequency component of the graph, we interpret the $\lambda_i$ in ${\bf A}=\sum_{i=1}^{N}\lambda_i \hat{\bf V}_i$ as the significance of the corresponding frequency component. We further note that, indeed the frequency interpretation of time-series data comes from the linear cyclic graph structure of the time series data.
		
We emphasize that, a graph shift operator should preserve the frequency contents of a graph. Therefore, it should be an equally weighted linear combination of the eigengraphs with only phase shifts. In other words, ${\bf A}_\phi=\sum_{i=1}^{N}\alpha_i \hat{\bf V}_i$, where $|\alpha_i|=1$. This left us with the choice that $\alpha_i=e^{j\phi_i}$, where $0 \leq \phi_i < 2 \pi$. We further note that, to have a graph shift operator with independent eigengraph representation, we assume that $\phi_i \neq \phi_j$, for all $i \neq j$. This result is in accordance with the definition of graph shift operator as we defined earlier.

$\bf Example$ : Consider the directed cyclic graph with three node as shown in Fig.~\ref{fig:SimpleExample2}. The adjacency matrix ${\bf A}_{\rm cyclic}$ of this graph is given by 
\begin{align}\label{eq-DFT3} 
{{\bf A}_{\rm cyclic}}\!= \!\!\begin{pmatrix} 0&0 & 1 \\ 1&0 &0 \\ 0 & 1 & 0 \end{pmatrix}\!\!= \underset{{\bf V}}{\underbrace{\frac{1}{\sqrt{3}}\begin{pmatrix} 1&1 & 1 \\ 1&e^{j\frac{2\pi1}{3}} &e^{j\frac{2\pi2}{3}} \\ 1 & e^{j\frac{2\pi2}{3}} & e^{j\frac{2\pi4}{3}} \end{pmatrix}}} \cdot \;\;\;\;\;\;\;\;\;\;\;\; \nonumber \\  \underset{{\boldsymbol \Lambda}_{\rm cyclic}} {\underbrace{\begin{pmatrix} e^{-j\frac{2\pi 0}{3}}&0 & 0 \\ 0&e^{-j\frac{2\pi 1}{3}} &0 \\ 0 & 0 & e^{-j\frac{2\pi 2}{3}} \end{pmatrix}}} \cdot \underset{{\bf V}^{-1}}{\underbrace{\frac{1}{\sqrt{3}}\begin{pmatrix} 1&1 & 1 \\ 1&e^{-j\frac{2\pi1}{3}} &e^{-j\frac{2\pi2}{3}} \\ 1 & e^{-j\frac{2\pi2}{3}} & e^{-j\frac{2\pi4}{3}} \end{pmatrix}}}, 
\end{align}
where ${\bf V}^{-1}$ is the discrete Fourier (DFT) transform matrix. The eigengraphs and the signal transition (bipartite) graphs of the graph structure in Fig.~\ref{fig:SimpleExample2} are shown in Fig.~\ref{fig:SimpleExample}(a) and Fig.~\ref{fig:SimpleExample}(b)., where $v_{il}=e^{\frac{2 \pi (l-1)(i-1)}{N}} $, $\tilde{v}_{im}=e^{-\frac{2 \pi (m-1)(i-1)}{N}}$. Also note that the ${\boldsymbol \Lambda}_{\rm cyclic}$ is of the form of the special ${\boldsymbol \Lambda}_{e}$ defined in \eqref{LambdaEquation} and the adjacency matrix $\bA_{cyclic}$ is exactly $\bA_e$. \label{para-Acyclic}

\begin{figure}[H] 
\begin{center}
\scalebox{0.8} 
{
\begin{pspicture}(1,2)(16.437813,4.1)
\psdots[dotsize=0.2](3.9639063,2.3665628)
\psdots[dotsize=0.2](6.343906,2.306563)
\psdots[dotsize=0.2](8.463906,2.2865627)
\psline[linewidth=0.04cm,arrowsize=0.05291667cm 2.0,arrowlength=1.4,arrowinset=0.4]{->}(4.3239064,2.306563)(6.2239065,2.306563)
\psline[linewidth=0.04cm,arrowsize=0.05291667cm 2.0,arrowlength=1.4,arrowinset=0.4]{->}(6.583906,2.2865627)(8.323907,2.2865627)
\psarc[linewidth=0.04,arrowsize=0.05291667cm 2.0,arrowlength=1.4,arrowinset=0.4]{->}(6.193907,0.8165628){2.79}{38.181786}{141.58194}
\usefont{T1}{ptm}{m}{n}
\rput(4.898281,2.066563){1}
\usefont{T1}{ptm}{m}{n}
\rput(7.36375,2.046563){1}
\usefont{T1}{ptm}{m}{n}
\rput(6.06375,3.9265625){1}
\usefont{T1}{ptm}{m}{n}
\rput(3.2,2.37){node 1}
\usefont{T1}{ptm}{m}{n}
\rput(9.2,2.37){node 3}
\usefont{T1}{ptm}{m}{n}
\rput(6.2,2){node 2}
\end{pspicture} 
}
\caption{Directed cyclic graph with three nodes, $v_{il}=e^{\frac{2 \pi (l-1)(i-1)}{N}} $, $\tilde{v}_{im}=e^{-\frac{2 \pi (m-1)(i-1)}{N}}$.}
    \label{fig:SimpleExample2}
\end{center}
\end{figure}
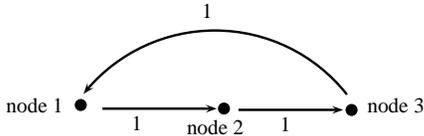

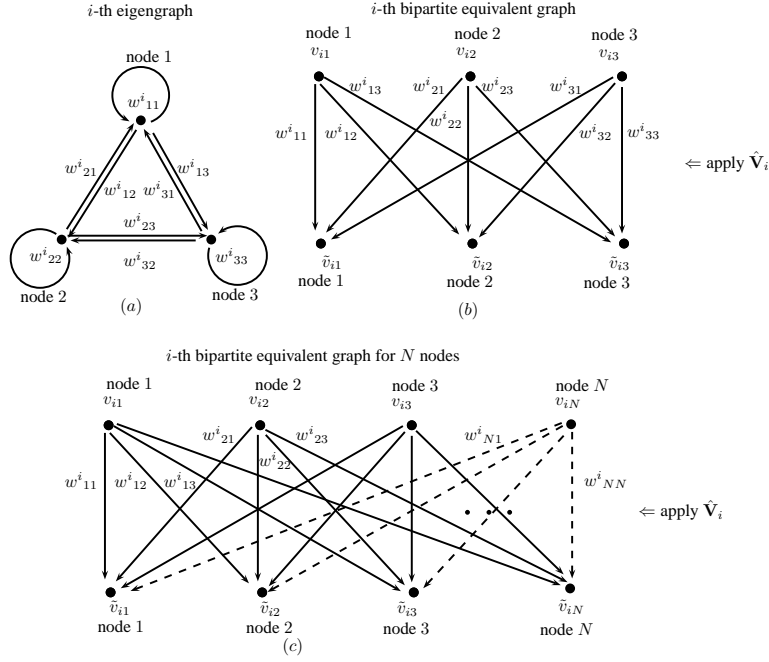
\begin{figure*}[htbp]
\begin{center}
\scalebox{0.65} 
{
\begin{pspicture}(0.46,-6.522969)(17.970469,6.522969)
\psdots[dotsize=0.20178913](6.6506248,4.976095)
\psdots[dotsize=0.20178913](9.750626,4.975532)
\psdots[dotsize=0.20178913](12.850625,4.975532)
\psdots[dotsize=0.20178913](6.6906247,1.5560949)
\psdots[dotsize=0.20178913](9.790626,1.555532)
\psdots[dotsize=0.20178913](12.890626,1.555532)
\usefont{T1}{ptm}{m}{n}
\usefont{T1}{ptm}{m}{n}
\psline[linewidth=0.04cm,arrowsize=0.05291667cm 2.0,arrowlength=1.4,arrowinset=0.4]{->}(6.574782,4.8195324)(6.574782,1.8195318)
\psline[linewidth=0.04cm,arrowsize=0.05291667cm 2.0,arrowlength=1.4,arrowinset=0.4]{->}(9.702514,4.7795324)(9.702514,1.779532)
\psline[linewidth=0.04cm,arrowsize=0.05291667cm 2.0,arrowlength=1.4,arrowinset=0.4]{->}(12.850425,4.7595325)(12.850425,1.759532)
\psline[linewidth=0.04cm,arrowsize=0.05291667cm 2.0,arrowlength=1.4,arrowinset=0.4]{->}(6.675677,4.8195324)(9.520904,1.739532)
\psline[linewidth=0.04cm,arrowsize=0.05291667cm 2.0,arrowlength=1.4,arrowinset=0.4]{->}(9.863945,4.7795324)(12.709174,1.6995322)
\psline[linewidth=0.04cm,arrowsize=0.05291667cm 2.0,arrowlength=1.4,arrowinset=0.4]{->}(12.709174,4.8395324)(9.924483,1.679532)
\psline[linewidth=0.04cm,arrowsize=0.05291667cm 2.0,arrowlength=1.4,arrowinset=0.4]{->}(9.581441,4.9395323)(6.796751,1.779532)
\psline[linewidth=0.04cm,arrowsize=0.05291667cm 2.0,arrowlength=1.4,arrowinset=0.4]{->}(6.756393,4.9595323)(12.628457,1.579532)
\psline[linewidth=0.04cm,arrowsize=0.05291667cm 2.0,arrowlength=1.4,arrowinset=0.4]{->}(12.668816,4.9595323)(6.897646,1.6395319)
\psdots[dotsize=0.2](3.0106254,4.0760946)
\psline[linewidth=0.04cm,arrowsize=0.05291667cm 2.0,arrowlength=1.4,arrowinset=0.4]{->}(1.5106254,1.7160946)(4.370625,1.7160946)
\psline[linewidth=0.04cm,arrowsize=0.05291667cm 2.0,arrowlength=1.4,arrowinset=0.4]{->}(4.2517967,1.8092037)(3.069454,3.9029856)
\psline[linewidth=0.04cm,arrowsize=0.05291667cm 2.0,arrowlength=1.4,arrowinset=0.4]{->}(2.915845,3.8815076)(1.5254059,1.6706816)
\psline[linewidth=0.04cm,arrowsize=0.05291667cm 2.0,arrowlength=1.4,arrowinset=0.4]{->}(3.2592785,3.8190951)(4.381972,1.8330942)
\psdots[dotsize=0.20178913](6.6506248,4.976095)
\psdots[dotsize=0.20178913](9.750626,4.975532)
\psdots[dotsize=0.20178913](12.850625,4.975532)
\psdots[dotsize=0.20178913](6.6906247,1.5560949)
\psdots[dotsize=0.20178913](9.790626,1.555532)
\psdots[dotsize=0.20178913](12.890626,1.555532)
\usefont{T1}{ptm}{m}{n}
\rput(6.691392,5.455531){$v_{i1}$}
\usefont{T1}{ptm}{m}{n}
\rput(6.9221077,1.175532){$\tilde{v}_{i1}$}
\usefont{T1}{ptm}{m}{n}
\rput(6.0923433,3.8395317){${w^i}_{11} $}
\usefont{T1}{ptm}{m}{n}
\rput(7.1109376,3.8395317){${w^i}_{12} $}
\usefont{T1}{ptm}{m}{n}
\rput(7.6109376,4.789532){${w^i}_{13} $}
\usefont{T1}{ptm}{m}{n}
\rput(8.910937,4.789532){${w^i}_{21} $}
\usefont{T1}{ptm}{m}{n}

\usefont{T1}{ptm}{m}{n}
\rput(10.310938,4.789532){${w^i}_{23} $}
\usefont{T1}{ptm}{m}{n}
\rput(9.250937,4.089532){${w^i}_{22} $}
\usefont{T1}{ptm}{m}{n}
\rput(11.7109375,4.789532){${w^i}_{31} $}
\usefont{T1}{ptm}{m}{n}
\rput(12.310938,3.8395317){${w^i}_{32} $}
\usefont{T1}{ptm}{m}{n}
\rput(13.292343,3.8595319){${w^i}_{33} $}
\usefont{T1}{ptm}{m}{n}
\rput(3.0723438,4.471703){${w^i}_{11} $}
\usefont{T1}{ptm}{m}{n}
\rput(1.0723437,1.3195323){${w^i}_{22} $}
\usefont{T1}{ptm}{m}{n}
\rput(4.8723435,1.2995323){${w^i}_{33} $}
\psline[linewidth=0.04cm,arrowsize=0.05291667cm 2.0,arrowlength=1.4,arrowinset=0.4]{->}(1.4938074,1.8500773)(2.867443,4.0221124)
\psline[linewidth=0.04cm,arrowsize=0.05291667cm 2.0,arrowlength=1.4,arrowinset=0.4]{->}(4.1308846,1.6187197)(1.5703664,1.6187197)
\usefont{T1}{ptm}{m}{n}
\rput(1.7909375,3.1195319){${w^i}_{21} $}
\usefont{T1}{ptm}{m}{n}
\rput(2.5909376,2.6595323){${w^i}_{12} $}
\usefont{T1}{ptm}{m}{n}
\rput(3.3909376,2.6595323){${w^i}_{31} $}
\usefont{T1}{ptm}{m}{n}
\rput(4.0909376,3.1095316){${w^i}_{13} $}
\usefont{T1}{ptm}{m}{n}
\rput(2.9909375,2.0095317){${w^i}_{23} $}
\usefont{T1}{ptm}{m}{n}
\rput(2.9909375,1.2095319){${w^i}_{32} $}
\usefont{T1}{ptm}{m}{n}
\rput(2.999063,6.2945313){$i$-th eigengraph}
\usefont{T1}{ptm}{m}{n}
\rput(9.814844,6.3345313){$i$-th bipartite equivalent graph}
\psarc[linewidth=0.04,arrowsize=0.05291667cm 2.0,arrowlength=1.4,arrowinset=0.4]{->}(3.0067189,4.5895324){0.58}{-68.9625}{246.8014}
\psarc[linewidth=0.04,arrowsize=0.05291667cm 2.0,arrowlength=1.4,arrowinset=0.4]{->}(4.9867187,1.3095319){0.6}{164.47589}{131.63354}
\psdots[dotsize=0.2](1.3906254,1.6360947)
\psdots[dotsize=0.2](4.4506254,1.6360947)
\psarc[linewidth=0.04,arrowsize=0.05291667cm 2.0,arrowlength=1.4,arrowinset=0.4]{->}(0.9467189,1.249532){0.6}{56.309933}{23.962488}
\psdots[dotsize=0.20178913](2.3506248,-2.163905)
\psdots[dotsize=0.20178913](5.450626,-2.1644678)
\psdots[dotsize=0.20178913](8.550625,-2.1644678)
\psdots[dotsize=0.20178913](2.3906248,-5.5839047)
\psdots[dotsize=0.20178913](5.490626,-5.584468)
\psdots[dotsize=0.20178913](8.590626,-5.584468)
\psline[linewidth=0.04cm,arrowsize=0.05291667cm 2.0,arrowlength=1.4,arrowinset=0.4]{->}(2.2747817,-2.320468)(2.2747817,-5.3204684)
\psline[linewidth=0.04cm,arrowsize=0.05291667cm 2.0,arrowlength=1.4,arrowinset=0.4]{->}(5.402514,-2.360468)(5.402514,-5.360468)
\psline[linewidth=0.04cm,arrowsize=0.05291667cm 2.0,arrowlength=1.4,arrowinset=0.4]{->}(8.550425,-2.380468)(8.550425,-5.380468)
\psline[linewidth=0.04cm,arrowsize=0.05291667cm 2.0,arrowlength=1.4,arrowinset=0.4]{->}(2.3756769,-2.320468)(5.220904,-5.400468)
\psline[linewidth=0.04cm,arrowsize=0.05291667cm 2.0,arrowlength=1.4,arrowinset=0.4]{->}(5.563945,-2.360468)(8.409174,-5.440468)
\psline[linewidth=0.04cm,arrowsize=0.05291667cm 2.0,arrowlength=1.4,arrowinset=0.4]{->}(8.409174,-2.300468)(5.6244826,-5.460468)
\psline[linewidth=0.04cm,arrowsize=0.05291667cm 2.0,arrowlength=1.4,arrowinset=0.4]{->}(5.281441,-2.200468)(2.4967508,-5.360468)
\psline[linewidth=0.04cm,arrowsize=0.05291667cm 2.0,arrowlength=1.4,arrowinset=0.4]{->}(2.4563928,-2.180468)(8.328457,-5.5604677)
\psline[linewidth=0.04cm,arrowsize=0.05291667cm 2.0,arrowlength=1.4,arrowinset=0.4]{->}(8.368815,-2.180468)(2.5976458,-5.500468)
\psdots[dotsize=0.20178913](2.3506248,-2.163905)
\psdots[dotsize=0.20178913](5.450626,-2.1644678)
\psdots[dotsize=0.20178913](8.550625,-2.1644678)
\psdots[dotsize=0.20178913](2.3906248,-5.5839047)
\psdots[dotsize=0.20178913](5.490626,-5.584468)
\psdots[dotsize=0.20178913](8.590626,-5.584468)
\usefont{T1}{ptm}{m}{n}
\usefont{T1}{ptm}{m}{n}
\usefont{T1}{ptm}{m}{n}
\rput(1.7923437,-3.3004682){${w^i}_{11} $}
\usefont{T1}{ptm}{m}{n}
\rput(2.8109376,-3.3004682){${w^i}_{12} $}
\usefont{T1}{ptm}{m}{n}
\rput(3.82,-3.3004682){${w^i}_{13} $}
\usefont{T1}{ptm}{m}{n}
\rput(4.6109376,-2.3504682){${w^i}_{21} $}
\usefont{T1}{ptm}{m}{n}
\rput(6.5200005,-2.3504682){${w^i}_{23} $}
\usefont{T1}{ptm}{m}{n}
\rput(5.76,-2.8794994){${w^i}_{22} $}
\usefont{T1}{ptm}{m}{n}
\rput(10.02,-2.3504682){${w^i}_{N1} $}
\usefont{T1}{ptm}{m}{n}
\rput(12.52,-3.280468){${w^i}_{NN} $}
\usefont{T1}{ptm}{m}{n}
\rput(6.529375,-0.76546824){$i$-th bipartite equivalent graph for $N$ nodes}
\psdots[dotsize=0.1](9.690626,-3.944468)
\psdots[dotsize=0.1](10.130626,-3.944468)
\psdots[dotsize=0.1](10.550627,-3.944468)
\psdots[dotsize=0.20178913](11.810627,-2.1444676)
\psdots[dotsize=0.20178913](11.790626,-5.504468)
\psline[linewidth=0.04cm,linestyle=dashed,dash=0.16cm 0.16cm,arrowsize=0.05291667cm 2.0,arrowlength=1.4,arrowinset=0.4]{->}(11.830425,-2.300468)(11.830425,-5.3004684)
\psline[linewidth=0.04cm,arrowsize=0.05291667cm 2.0,arrowlength=1.4,arrowinset=0.4]{->}(2.5185938,-2.1370308)(11.597188,-5.4370303)
\psline[linewidth=0.04cm,arrowsize=0.05291667cm 2.0,arrowlength=1.4,arrowinset=0.4]{->}(5.5571876,-2.2570307)(11.737187,-5.3770304)
\psline[linewidth=0.04cm,arrowsize=0.05291667cm 2.0,arrowlength=1.4,arrowinset=0.4]{->}(8.638594,-2.277031)(11.657187,-5.2170305)
\psline[linewidth=0.04cm,linestyle=dashed,dash=0.16cm 0.16cm,arrowsize=0.05291667cm 2.0,arrowlength=1.4,arrowinset=0.4]{->}(11.657187,-2.3770308)(8.784483,-5.500468)
\psline[linewidth=0.04cm,linestyle=dashed,dash=0.16cm 0.16cm,arrowsize=0.05291667cm 2.0,arrowlength=1.4,arrowinset=0.4]{->}(11.6171875,-2.1170309)(2.7971876,-5.5570307)
\psline[linewidth=0.04cm,linestyle=dashed,dash=0.16cm 0.16cm,arrowsize=0.05291667cm 2.0,arrowlength=1.4,arrowinset=0.4]{->}(11.637187,-2.1770306)(5.597188,-5.5770307)
\usefont{T1}{ptm}{m}{n}
\rput(2.8164976,0.25609434){$(a)$}
\usefont{T1}{ptm}{m}{n}
\rput(9.706497,0.17609432){$(b)$}
\usefont{T1}{ptm}{m}{n}
\rput(6.1164975,-6.7){$(c)$}
\usefont{T1}{ptm}{m}{n}
\rput(15.0,3.2395322){$\Leftarrow \text{apply } \hat{\bf V}_i$}
\usefont{T1}{ptm}{m}{n}
\rput(9.671392,5.455531){$v_{i2}$}
\usefont{T1}{ptm}{m}{n}
\rput(12.5913925,5.4155316){$v_{i3}$}
\usefont{T1}{ptm}{m}{n}
\rput(10.002108,1.1555319){$\tilde{v}_{i2}$}
\usefont{T1}{ptm}{m}{n}
\rput(12.802108,1.1555319){$\tilde{v}_{i3}$}
\usefont{T1}{ptm}{m}{n}
\rput(2.451392,-1.6844685){$v_{i1}$}
\usefont{T1}{ptm}{m}{n}
\rput(2.5821075,-5.904468){$\tilde{v}_{i1}$}
\usefont{T1}{ptm}{m}{n}
\rput(5.431392,-1.6844685){$v_{i2}$}
\usefont{T1}{ptm}{m}{n}
\rput(5.662108,-5.924468){$\tilde{v}_{i2}$}
\usefont{T1}{ptm}{m}{n}
\rput(8.351392,-1.7244685){$v_{i3}$}
\usefont{T1}{ptm}{m}{n}
\rput(8.462108,-5.924468){$\tilde{v}_{i3}$}
\usefont{T1}{ptm}{m}{n}
\rput(11.751392,-1.7044685){$v_{iN}$}
\usefont{T1}{ptm}{m}{n}
\rput(11.822108,-5.884468){$\tilde{v}_{iN}$}
\usefont{T1}{ptm}{m}{n}
\rput(3.1795318,5.3945312){node $1$}
\usefont{T1}{ptm}{m}{n}
\rput(1.0195318,0.47453126){node $2$}
\usefont{T1}{ptm}{m}{n}
\rput(4.939532,0.51453125){node $3$}
\usefont{T1}{ptm}{m}{n}
\rput(6.859532,5.8745313){node $1$}
\usefont{T1}{ptm}{m}{n}
\rput(9.899532,5.8545313){node $2$}
\usefont{T1}{ptm}{m}{n}
\rput(12.699532,5.8345313){node $3$}
\usefont{T1}{ptm}{m}{n}
\rput(12.039532,-1.3454688){node $N$}
\usefont{T1}{ptm}{m}{n}
\rput(6.699532,0.7945312){node $1$}
\usefont{T1}{ptm}{m}{n}
\rput(9.7395315,0.77453125){node $2$}
\usefont{T1}{ptm}{m}{n}
\rput(12.539532,0.75453126){node $3$}
\usefont{T1}{ptm}{m}{n}
\rput(2.779532,-1.3054688){node $1$}
\usefont{T1}{ptm}{m}{n}
\rput(5.819532,-1.3254688){node $2$}
\usefont{T1}{ptm}{m}{n}
\rput(8.619532,-1.3454688){node $3$}
\usefont{T1}{ptm}{m}{n}
\rput(2.599532,-6.2854686){node $1$}
\usefont{T1}{ptm}{m}{n}
\rput(5.6395316,-6.3054686){node $2$}
\usefont{T1}{ptm}{m}{n}
\rput(8.439532,-6.3254685){node $3$}
\usefont{T1}{ptm}{m}{n}
\rput(11.719532,-6.3454685){node $N$}
\usefont{T1}{ptm}{m}{n}
\rput(14.06,-3.9004679){$\Leftarrow \text{apply } \hat{\bf V}_i$}
\end{pspicture} 
}
\caption{Eigengraph structure for a three node graph is shown in $(a)$, where $w_{lm}^{i}=v_{im}\tilde{v}_{il} $ is the signal transition weight from node $l$ to node $m$. The signal/information transition representation of the $i$-th eigengraph is shown in $(b)$. In this figure, $v_{il}$ is the outgoing weight from node $l$ and $\tilde{v}_{im}$ is the incoming weight to node $m$. Such a weight distribution preserves the rank one property of the $i$-th eigengraph. Part $(c)$ is the generalization of the signal transition representation of the $i$-th eigengraph for an $N$ node graph structure.}
    \label{fig:SimpleExample}
\end{center}
\end{figure*}

\section{Graph Filters based on the New Shift Operator}\label{sec-gfgso}
In classical signal processing, filters are referred to operators that apply on a signal as input, and produce another signal as output. Filters can be categorized into different classes, e.g., continues time or discrete time, linear and nonlinear, time invariant and time-varying. The compatible category of filter classes to graph signals is discrete time linear filters. Linear filtering on graphs is represented by multiplying the input signal vector $\bf x$ by a matrix ${\bf H}\in \mathbb{C}^{N \times N}$, called filter matrix. The filtered output signal vector $\by ={\bf H}\bf x$. This filter operates on graph signals similar to the shift operator, i.e., the filtered signal $\by$ at the $i$-th vertex is a linear combination of the value of the original signal ${\bf x}$. More specifically, $\by(i)=\sum_{j=1}^{N}{\bf H}(i,j) {\bf x}(j)$.

\subsection{Linear Shift Invariant Graph Filters} \label{sec-LSI}

If we consider the shift operator on a graph to be ${\bf A}_{\phi}$, then the linear shift invariant property (LSI) of filters is ${\bf H}\bf{A}_{\phi} \bf x={\bf A}_{\phi}{\bf H} \bf x$. Indeed, this property implies that, the filter and the shift operator are commutable. It is straightforward to show that the following theorem in \cite{Moura1,Moura2} still hold for a graph LSI filter defined by shift operator $\bA_\phi$.
\begin{athm}\label{theorem-LSI}
Every polynomial of a square matrix ${{\bf A}_{\phi}}$ is a graph LSI filter and every graph LSI filter is a polynomial of a square matrix ${{\bf A}_{\phi}}$.
\end{athm}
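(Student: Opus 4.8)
The plan is to prove the two inclusions separately, following the classical argument but checking that it goes through for the (possibly complex, possibly non-normal) matrix ${\bf A}_{\phi}$. The easy direction is purely algebraic: if ${\bf H}=\sum_{k=0}^{L-1}h_k {\bf A}_{\phi}^k$ is any polynomial in ${\bf A}_{\phi}$, then since every power of ${\bf A}_{\phi}$ commutes with ${\bf A}_{\phi}$ we get ${\bf H}{\bf A}_{\phi}=\sum_{k}h_k {\bf A}_{\phi}^{k+1}={\bf A}_{\phi}{\bf H}$, so ${\bf H}$ is LSI. (One may additionally note, via Cayley--Hamilton, that $L$ can be taken at most $N$, since all higher powers of ${\bf A}_{\phi}$ lie in the span of $\{{\bf I},{\bf A}_{\phi},\ldots,{\bf A}_{\phi}^{N-1}\}$.)

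For the converse — every LSI filter, i.e.\ every ${\bf H}$ with ${\bf H}{\bf A}_{\phi}={\bf A}_{\phi}{\bf H}$, is a polynomial in ${\bf A}_{\phi}$ — I would exploit the structure built into the definition of ${\bf A}_{\phi}$: it is diagonalizable as ${\bf A}_{\phi}={\bf V}{\boldsymbol \Lambda}_{\phi}{\bf V}^{-1}$, and its eigenvalues $\lambda_{\phi_k}=e^{j\phi_k}$ are pairwise distinct because the definition requires $\phi_k\neq\phi_l$ for $k\neq l$. Conjugating the commutation relation by ${\bf V}$ and setting ${\bf D}:={\bf V}^{-1}{\bf H}{\bf V}$, the identity ${\bf H}{\bf A}_{\phi}={\bf A}_{\phi}{\bf H}$ becomes ${\bf D}{\boldsymbol \Lambda}_{\phi}={\boldsymbol \Lambda}_{\phi}{\bf D}$. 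Written entrywise this says $(\lambda_{\phi_l}-\lambda_{\phi_m})\,{\bf D}(l,m)=0$ for all $l,m$, and since the $\lambda_{\phi_k}$ are distinct this forces ${\bf D}$ to be diagonal, say ${\bf D}=\mathrm{diag}(d_1,\ldots,d_N)$.

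The last step is interpolation: because the nodes $\lambda_{\phi_1},\ldots,\lambda_{\phi_N}$ are distinct, there is a polynomial $p$ of degree at most $N-1$ with $p(\lambda_{\phi_k})=d_k$ for every $k$, namely the Lagrange polynomial $p(z)=\sum_{k=1}^{N} d_k\prod_{l\neq k}\frac{z-\lambda_{\phi_l}}{\lambda_{\phi_k}-\lambda_{\phi_l}}$. Then $p({\boldsymbol \Lambda}_{\phi})={\bf D}$, hence $p({\bf A}_{\phi})={\bf V}p({\boldsymbol \Lambda}_{\phi}){\bf V}^{-1}={\bf V}{\bf D}{\bf V}^{-1}={\bf H}$, which exhibits ${\bf H}$ as a polynomial in ${\bf A}_{\phi}$, completing the proof; the same argument covers ${\bf A}_e$ verbatim as a special case. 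I expect the only real obstacle to be bookkeeping rather than mathematics: one must remember that ${\bf A}_{\phi}$ need not be symmetric or normal, so ${\bf V}$ is merely invertible and not unitary — but diagonalizability and distinctness of the eigenvalues, both guaranteed by the construction, are precisely the hypotheses that make the "commutant equals polynomials" argument valid, so no extra assumption is needed.
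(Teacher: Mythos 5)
Your proof is correct, and it is actually more than the paper provides: the paper does not prove Theorem~\ref{theorem-LSI} at all, declaring it ``straightforward'' and deferring to the cited works of Sandryhaila and Moura, where the analogous statement is established for a general shift matrix under the hypothesis that its characteristic and minimal polynomials coincide (i.e., the shift is non-derogatory). Your argument is the natural specialization of that route: the forward direction by commutativity of powers, and the converse by conjugating the commutation relation to ${\bf D}{\boldsymbol\Lambda}_\phi={\boldsymbol\Lambda}_\phi{\bf D}$, using the distinctness of the $\lambda_{\phi_k}=e^{j\phi_k}$ (guaranteed by the definition's requirement $\phi_k\neq\phi_l$) to force ${\bf D}$ diagonal, and then Lagrange interpolation to realize ${\bf H}={\bf V}{\bf D}{\bf V}^{-1}$ as $p({\bf A}_\phi)$ with $\deg p\le N-1$. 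You also correctly identify the two hypotheses that make the commutant argument work --- diagonalizability and simple spectrum of ${\bf A}_\phi$, both built into its construction --- and correctly note that unitarity of ${\bf V}$ is irrelevant. The only gain of the cited general argument over yours is that it covers non-diagonalizable (Jordan) shifts, which the paper excludes by assumption anyway; your version buys a short, self-contained, and fully explicit proof for the operators actually defined in the paper.
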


This theorem shows that every LSI graph filter is a polynomial in the graph shift matrix, i.e., 
\begin{align}\label{eq-LSIFilter}
{\bf H}=h({{\bf A}_{\phi}})=\sum_{k=0}^{L-1}h_k{{\bf A}_{\phi}}^k
\end{align}
where $h_k$ is called the $k$-th tap of the graph filter and $(L-1)$ is the order of the polynomial representation of the LSI filter.

We therefore can prove the following theorem:
\begin{athm}\label{theorem-LSI-adjacent}
Any arbitrary adjacency matrix ${\bf A}$ is an LSI filter under ${\bf A}_\phi$. 
\end{athm}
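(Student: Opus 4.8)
The plan is to exploit the fact that $\bA$ and $\bA_\phi$ are simultaneously diagonalized by $\bV$. First I would write $\bA = \bV{\boldsymbol \Lambda}\bV^{-1}$ and $\bA_\phi = \bV{\boldsymbol \Lambda}_\phi\bV^{-1}$ and observe that, since diagonal matrices commute, $\bA\bA_\phi = \bV{\boldsymbol \Lambda}{\boldsymbol \Lambda}_\phi\bV^{-1} = \bV{\boldsymbol \Lambda}_\phi{\boldsymbol \Lambda}\bV^{-1} = \bA_\phi\bA$. This is precisely the shift-invariance condition $\bH\bA_\phi\bx = \bA_\phi\bH\bx$ for all graph signals $\bx$, with $\bH = \bA$, so $\bA$ is a linear shift invariant filter directly from the definition of LSI filtering given in Section~\ref{sec-LSI}.

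To make the statement fully consistent with Theorem~\ref{theorem-LSI}, I would then exhibit $\bA$ explicitly as a polynomial in $\bA_\phi$. The key ingredient is that $\bA_\phi$ has $N$ distinct eigenvalues $\lambda_{\phi_k} = e^{j\phi_k}$, because the phases $\phi_k$ are pairwise distinct in $[0,2\pi)$. Hence $\bA_\phi$ is non-derogatory (its minimal polynomial has degree $N$), and Lagrange interpolation through the points $(\lambda_{\phi_k},\lambda_k)$, $k=1,\dots,N$, produces a polynomial $h$ of degree at most $N-1$ with $h(\lambda_{\phi_k}) = \lambda_k$ for every $k$. Applying $h$ to the eigendecomposition gives $h(\bA_\phi) = \bV\,\text{diag}\big(h(\lambda_{\phi_1}),\dots,h(\lambda_{\phi_N})\big)\bV^{-1} = \bV{\boldsymbol \Lambda}\bV^{-1} = \bA$, so $\bA = \sum_{k=0}^{N-1} h_k\bA_\phi^{\,k}$, which is the LSI form in \eqref{eq-LSIFilter}.

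I do not anticipate a serious obstacle here: the content is essentially the simultaneous-diagonalization observation together with the standard fact that a matrix commuting with a non-derogatory matrix is a polynomial in it. The only points deserving a line of care are (i) that $|{\boldsymbol \Lambda}_\phi| = \bI$ together with the distinctness of the $\phi_k$ indeed yields $N$ distinct eigenvalues, so the interpolation is well posed, and (ii) that the coefficients $h_k$ may in general be complex even though $\bA$ is real-valued, which is harmless since the framework already admits complex filter taps and complex-valued graph signals. If one prefers not to invoke Theorem~\ref{theorem-LSI}, the commutativity computation above is self-contained and already establishes the claim as stated.
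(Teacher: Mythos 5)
Your proof is correct and takes essentially the same route as the paper's: both rest on the fact that $\bA$ and $\bA_\phi$ are simultaneously diagonalized by $\bV$, which yields the defining commutation property (the paper phrases this via the built-in factorization $\bA=\bA_h\bA_\phi=\bA_\phi\bA_h$ from the shift-operator definition, while you compute $\bA\bA_\phi=\bA_\phi\bA$ directly). Your extra Lagrange-interpolation step exhibiting $\bA$ explicitly as a polynomial in $\bA_\phi$ (valid because the $\lambda_{\phi_k}$ are distinct) is exactly the content of the paper's remark immediately following the theorem, where for $\bA_e$ the coefficients are identified as the IDFT of the eigenvalue vector, so it is a sound supplement rather than a different argument.
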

\begin{proof}: We know from the definition of the graph shift operator that ${\bf A}={\bf A}_{h}{\bf A}_{\phi}={\bf A}_{\phi}{\bf A}_{h}$, and hence the LSI filter ${\bf H}={\bf A}_{h}$.
\end{proof}
\emph{Remark}: Indeed, one can write ${\bf A}=\sum_{k=0}^{L-1}h_k{{\bf A}_{\phi}}^k$. We note that when the shift matrix is ${\bf A}_e$, one can show that $h_k=\sum_{l=1}^Ne^{j\frac{2 \pi kl}{N}}{\lambda_l}$, i.e., $h_k$ is the $k$-th coefficient of the IDFT of the eigenvalue vector ${\boldsymbol \lambda}$, where $\lambda_l$ is the $l$-th eigenvalue of $\bf A$. This result allows us to compute the filter coefficients more efficiently. Theorem~\ref{theorem-LSI-adjacent} shows that the adjacency matrix $\bA$ can be decomposed into two parts. The first part is the energy-preserving graph shift operator $\bA_\phi$, i.e., frequency components of a graph. The second part is the filtering part represented by eigenvalues of the adjacency matrix, which changes the amplitudes of the frequency components. 

\begin{athm}\label{TheoremReversePolynomial}
	The graph shift operator ${\bf A}_\phi$ can be written as a polynomial of the graph adjacency matrix ${\bf A}$, if the eigenvalues of ${\bf A}$ are all distinct.
\end{athm}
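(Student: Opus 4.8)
The plan is to use the fact that $\bA$ and $\bA_\phi$ are \emph{simultaneously diagonalized} by the same $\bV$, so that constructing the required polynomial reduces to a scalar interpolation problem on the eigenvalues. Concretely, write $\bA = \bV{\boldsymbol \Lambda}\bV^{-1}$ with ${\boldsymbol \Lambda} = \text{diag}(\lambda_1,\dots,\lambda_N)$ and $\bA_\phi = \bV{\boldsymbol \Lambda}_\phi\bV^{-1}$ with ${\boldsymbol \Lambda}_\phi = \text{diag}(\lambda_{\phi_1},\dots,\lambda_{\phi_N})$, exactly as in the definition of the shift operator. For any polynomial $p(t)=\sum_{k=0}^{L-1}c_k t^k$ one has $p(\bA)=\bV\,p({\boldsymbol \Lambda})\,\bV^{-1}=\bV\,\text{diag}\!\big(p(\lambda_1),\dots,p(\lambda_N)\big)\bV^{-1}$, so $p(\bA)=\bA_\phi$ holds \emph{iff} $p(\lambda_i)=\lambda_{\phi_i}$ for every $i=1,\dots,N$.

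The second step is to produce such a $p$. Since the eigenvalues $\lambda_1,\dots,\lambda_N$ are pairwise distinct by hypothesis, the interpolation data $\{(\lambda_i,\lambda_{\phi_i})\}_{i=1}^N$ is admissible, and a solution of degree at most $N-1$ exists and is given in closed form by the Lagrange formula
\[
p(t)=\sum_{i=1}^{N}\lambda_{\phi_i}\prod_{\substack{m=1\\ m\neq i}}^{N}\frac{t-\lambda_m}{\lambda_i-\lambda_m}.
\]
Equivalently, the coefficient vector $(c_0,\dots,c_{N-1})^T$ solves the Vandermonde system with nodes $\lambda_i$, whose matrix is invertible precisely because the $\lambda_i$ are distinct. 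Substituting this $p$ into the identity of the first step yields $p(\bA)=\bV{\boldsymbol \Lambda}_\phi\bV^{-1}=\bA_\phi$, which proves the theorem. By Theorem~\ref{theorem-LSI}, $p(\bA)$ is moreover an LSI filter, consistent with the framework.

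I do not expect a serious obstacle here: the only substantive ingredient is the elementary fact that a polynomial of a diagonalizable matrix acts entrywise on the eigenvalues, together with solvability of polynomial interpolation at distinct nodes. The role of the distinctness hypothesis is exactly to guarantee this interpolation is possible: if two eigenvalues of $\bA$ coincided, no polynomial could send them to two different phases $\lambda_{\phi_k}\neq\lambda_{\phi_l}$, so the assumption is essential (and, in contrast, the reverse decomposition $\bA=\bA_h\bA_\phi$ established earlier only needs diagonalizability). I would close with a remark that the coefficients $c_k$ are in general complex, since the $\lambda_{\phi_k}=e^{j\phi_k}$ are, and that when $\bA=\bC$ is the directed cyclic adjacency matrix and $\bA_\phi=\bA_e$, this construction specializes to the familiar DFT-based relations noted after Theorem~\ref{theorem-LSI-adjacent}.
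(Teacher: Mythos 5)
Your proposal is correct and follows essentially the same route as the paper's proof: both reduce the matrix identity to the scalar conditions $p(\lambda_i)=\lambda_{\phi_i}$ via simultaneous diagonalization by $\bV$, and both invoke solvability of the resulting Vandermonde (equivalently, Lagrange interpolation) system, which is guaranteed exactly by the distinctness of the eigenvalues. Your phrasing as an explicit interpolation problem is slightly cleaner than the paper's "assume a polynomial exists and solve for its coefficients," but the substance is identical.
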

\begin{proof}
	See Appendix~\ref{ReversePolynomial}.
\end{proof}
\emph{Remark}: The adjacency matrix $\bA$ is often sparse (local). We can design an LSI filter using $\bA_\phi$ since it has good mathematical properties. We can then convert the LSI filter as a polynomial of $\bA$ such that it has an efficient graph domain implementations. Note that using $\bA_\phi$ can have efficient in graph frequency domain as we will discuss in section~\ref{sec-optimalfilter}.

$\bf Example$: Let us consider the discrete time circular convolution $y[n]=h[n]\circledast x[n]=\sum_{m \in N}x[m]h[n-m]$ in classical signal processing for periodic time series data. Such an operator can be cast into the matrix form ${\bf y}={\bf H}{\bf x}$ where ${\bf x}$ and ${\bf y}$ are the input and output signal vectors, respectively. The filter matrix ${\bf H}$ has the following Toeplitz form
\begin{equation}\label{eq-toeplitz}
{\bf H} = 
 \begin{pmatrix}
  h[0] & h[N-1] & h[N-2] & \cdots & h[1] \\
  h[1] & h[0]   &  h[N-1]& \cdots & h[2] \\
  \vdots  & \vdots  & \vdots & \ddots & \vdots  \\
  h[N-1] &  h[N-2]&\cdots & h[1] & h[0] 
 \end{pmatrix}.
\end{equation}
 Note that, with a small abuse of notation, we will use $h_i$ in the graph representation instead of $h[i]$ in the classical signal processing counterpart. Using some matrix calculation, one can show that the filter matrix can be written as a polynomial of the circulant adjacency matrix \eqref{Circulant} as ${\bf H}=h({\bf C})=\sum_{k=0}^{N-1}h_k{\bf{C}}^k$. Note that in the cyclic graph, the adjacency matrix is exactly the $\bA_e$ defined in \eqref{eq-Ae}, $\bA_e=\bC$. This means that the circular convolution is equivalent to the LSI graph filtering based on the graph representation of the periodic time series data. We note that every Toeplitz graph filter matrix can be considered as a linear time invariant filter for time series periodic data.

$\bf Example$: Let us now consider filtering aperiodic time series data in classical signal processing. We show that such a filtering operation is also equivalent to the LSI graph filtering. To show this, let us start with the traditional signal processing filtering as $y[n]=h[n]* x[n]=\sum_{m=-\infty}^{\infty}x[m]h[n-m]$. Without loss of generality, we assume that $x[n]\neq0$, for $0\leq n \leq N-1$ and $h[n]\neq0$, for $0\leq n \leq L-1$, and $L<N$. Defining ${\bf 0}_{1 \times L-1}\triangleq[0\;\;0\;\;\cdots 0]$, ${\bf x}\triangleq[x[0] 
\;\;x[1] \cdots x[N-1] 
\;
\;{\bf 0}_{1 \times L-1} ]^T$ and ${\bf y}\triangleq[y[0]\;\;y[1]\;\;\cdots y[N+L-1]]^T$, one can rewrite the filtering equation as ${\bf y}={\bf H}{\bf x}$ where ${\bf H}_{(N+L-1)\times (N+L-1)}$ is defined in \eqref{FilterStructure1}.
\begin{align} \label{FilterStructure1}
&{\bf H}_{(N+L-1)\times (N+L-1)}= \nonumber\\ 
&\begin{pmatrix}
h[0] & 0& 0 &\cdots&0&\cdots&0\\
h[1] & h[0]&0&\cdots &0 &\cdots&0\\
h[2]& h[1]& h[0]& \cdots&0& \cdots&0\\
\vdots&\vdots&\vdots&\vdots&\cdots&\ddots&\vdots\\
0& 0 &\cdots&\cdots&h[L-1]& \cdots&h[0]\\
\end{pmatrix}.
\end{align}
The output of such a filtering operation, i.e., ${\bf y}={\bf H}{\bf x}$, is equivalent to that of the ${\bf y}=\tilde{\bf H}{\bf x}$, where $\tilde{\bf H}=\sum_{l=0}^{L-1}h_l{\bf C}^l$. Note that, in this example  ${\bf C}$ is the $(N+L-1)\times (N+L-1)$ circulant matrix defined in \eqref{Circulant}. One can easily show that $\tilde{\bf H}$ can be written as \eqref{eq-Haperiodic}.
\begin{figure*}[htbp]
\begin{align}\label{eq-Haperiodic}
\tilde{\bf H}_{(N+L-1)\!\times (N+L-1)}\!\!=\!\!\begin{pmatrix}
h[0] & 0& 0 &\cdots&0&h[L-1]&\cdots&h[2]&h[1]\\
h[1] & h[0]&0&\cdots &0 &0&h[L-1]&\cdots&h[2]\\
h[2]& h[1]& h[0]& \cdots&0& 0&0 & \cdots& h[3]\\
\vdots&\vdots&\vdots&\vdots&\vdots&\vdots&\vdots&\ddots&\vdots\\
0& 0 &\cdots&\cdots&h[L-1]& h[L-2]&h[L-3]&\cdots &h[0]\\
\end{pmatrix}
\end{align}
\end{figure*}
This result shows that filtering aperiodic discrete time signals by the filter operator ${\bf H }$ is equivalent to filtering the zero-padded graph signal $\bf x$ by the graph filter $\tilde{\bf H}$. Note that convolution for aperiodic signals is equivalent to circular convolution of the zero-padded periodic versions of the input signal, and hence it is equivalent to graph filtering where the graph filter is defined by the filter matrix $\tilde{\bf H}$. More specifically, let us consider the zero-padded periodic versions of the aperiodic signal $x[n] $ and filter $h[n]$ as ${\hat x}[n]={\hat x}[n+N+L]$ and ${\hat h}[n]={\hat h}[n+N+L]$ where
\begin{align}
&{\hat x}[n]\!\!=\!\!\begin{cases}x[n] & \!\!n=0,1,\cdots N-1 \\
0 &\!\! n=N,N+1,\cdots,N+L-1
\end{cases}\nonumber \\ &{\hat h}[n]\!\!=\!\!\begin{cases}h[n] &\!\! n=0,1,\cdots L-1 \\
0 &\!\! n=L,L+1,\cdots,N+L-1
\end{cases}
\end{align}
then, $x[n]*h[n]=\hat{x}[n]\circledast\hat{h}[n]$, for all $n$. Hence, filtering aperiodic time series data can be written as the graph LSI filtering of the zero-padded graph signals.

\begin{athm}\label{T3}
 When $L_{{\bf A}_{\phi}}\leq L$, for a LSI filter  there exists an equivalent form for the LSI filter in \eqref{eq-LSIFilter} as ${\bf H}=\breve{h}({{\bf A}_{\phi}})=\sum_{k=0}^{L_{{\bf A}_{\phi}}-1}\breve{h}_k{\bf A}_{\phi}^k$, where $L_{{\bf A}_{\phi}}$ is the degree of the minimal polynomial of  ${\bf A}_{\phi}$. Moreover, there exist a closed-form expression for the filter taps $\breve{h}_k$ as a function of $h_k$.
\end{athm}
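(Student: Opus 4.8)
The plan is to reduce the filter polynomial modulo the minimal polynomial of $\bA_\phi$. Let $m(x)$ denote the minimal polynomial of $\bA_\phi$, which has degree $L_{\bA_\phi}$, so that $m(\bA_\phi)={\bf 0}$. Performing Euclidean division of $h(x)=\sum_{k=0}^{L-1}h_k x^k$ by $m$ gives $h(x)=q(x)m(x)+r(x)$ with $\deg r\le L_{\bA_\phi}-1$. Substituting $\bA_\phi$ and using $m(\bA_\phi)={\bf 0}$,
\begin{equation}
\bH=h(\bA_\phi)=q(\bA_\phi)\,m(\bA_\phi)+r(\bA_\phi)=r(\bA_\phi),
\end{equation}
so $\breve h(x)\triangleq r(x)=\sum_{k=0}^{L_{\bA_\phi}-1}\breve h_k x^k$ is the claimed lower-order representation, and it is still an LSI filter by Theorem~\ref{theorem-LSI}. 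When $L_{\bA_\phi}>L$ there is nothing to reduce; the hypothesis $L_{\bA_\phi}\le L$ is precisely the regime in which the reduction genuinely lowers the polynomial degree.

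For the closed form of the taps $\breve h_k$, I would use the standing assumption that $\bA_\phi=\bV{\boldsymbol \Lambda}_\phi\bV^{-1}$ is diagonalizable with \emph{distinct} eigenvalues $\lambda_{\phi_i}=e^{j\phi_i}$, $i=1,\dots,N$, so that $m(x)=\prod_{i=1}^N(x-\lambda_{\phi_i})$ and hence $L_{\bA_\phi}=N$. For any polynomial $p$ we have $p(\bA_\phi)=\bV\,\text{diag}\big(p(\lambda_{\phi_1}),\dots,p(\lambda_{\phi_N})\big)\bV^{-1}$, so the matrix identity $\breve h(\bA_\phi)=\bH=h(\bA_\phi)$ forces $\breve h(\lambda_{\phi_i})=h(\lambda_{\phi_i})$ for every $i$. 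These $N$ linear equations in $(\breve h_0,\dots,\breve h_{N-1})$ form a square Vandermonde system with the distinct nodes $\lambda_{\phi_i}$, hence invertible, and give
\begin{equation}
\breve{\bf h}=\bW^{-1}\bmu,\qquad \mu_i=\sum_{k=0}^{L-1}h_k\,\lambda_{\phi_i}^{k},
\end{equation}
where $\breve{\bf h}=[\breve h_0,\dots,\breve h_{N-1}]^T$ and $\bW$ is the $N\times N$ Vandermonde matrix $\bW(i,k+1)=\lambda_{\phi_i}^{k}$ for $i=1,\dots,N$, $k=0,\dots,N-1$. Equivalently, $\breve h$ is the unique degree-$(N-1)$ Lagrange interpolant of $h$ at the points $\lambda_{\phi_1},\dots,\lambda_{\phi_N}$, which yields an explicit formula for each $\breve h_k$ in terms of the $h_k$.

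I would then specialize to $\bA_\phi=\bA_e$ with $\lambda_{e_k}=e^{-j2\pi(k-1)/N}$: there $\bW$ is a scaled DFT matrix, so $\bW^{-1}$ is the IDFT and $\breve{\bf h}$ is computed by an $N$-point inverse FFT of the sampled frequency response $[h(\lambda_{e_1}),\dots,h(\lambda_{e_N})]^T$, consistent with the remark following Theorem~\ref{theorem-LSI-adjacent}. The argument presents no real obstacle; the only point demanding care is the identification of $L_{\bA_\phi}$, which equals $N$ under the paper's assumptions (diagonalizable $\bA$, distinct phases $\phi_k$), making the Vandermonde/Lagrange description exact --- and if repeated phases were allowed, the same proof goes through with $m$ taken as the product over the distinct eigenvalues of $\bA_\phi$ and the interpolation carried out at those points.
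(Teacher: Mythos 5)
Your proof is correct, and its first half is in spirit the same as the paper's: both arguments exploit the minimal polynomial $m$ of $\bA_\phi$ to eliminate powers $\bA_\phi^k$ with $k\ge L_{\bA_\phi}$. Where you differ is in how the reduction and the closed form for $\breve h_k$ are obtained. The paper first computes the coefficients $\alpha_i$ of the monic minimal polynomial by diagonalizing the relation $\sum_{i=0}^{L_{\bA_\phi}-1}\alpha_i\bA_\phi^i=-\bA_\phi^{L_{\bA_\phi}}$ and solving a Vandermonde system in the eigenvalues $\lambda_{\phi_i}$; it then folds the top tap into the lower ones via $\breve h_k=h_k-h_{L_{\bA_\phi}}\alpha_k$ for $L=L_{\bA_\phi}+1$ and states that the step is repeated for larger $L$ (with the special case $\bA_e$ giving $\alpha_0=-1$, $\alpha_{l\neq 0}=0$, i.e., $\bA_e^{L_{\bA_e}}=\bI$). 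You instead perform a single Euclidean division $h=qm+r$ and then characterize $\breve h=r$ directly as the interpolant of $h$ at the distinct eigenvalues, yielding the one-shot formula $\breve{\bf h}=\bW^{-1}\bmu$ with $\mu_i=h(\lambda_{\phi_i})$, and the IDFT specialization for $\bA_e$. Your route buys a fully explicit, non-recursive expression for the reduced taps valid for any $L\ge L_{\bA_\phi}$ (and you correctly flag how repeated phases would be handled), whereas the paper's recursive reduction is more constructive about the role of the minimal-polynomial coefficients but only spells out the closed form for $L=L_{\bA_\phi}+1$. Both hinge on the same facts — diagonalizability and distinctness of the $\lambda_{\phi_i}$, hence invertibility of a Vandermonde matrix — so your argument is a legitimate, slightly cleaner variant of the paper's proof rather than a gap.
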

\begin{proof} See Appendix~\ref{Theorem3} for a constructive proof.  
\end{proof}
Finite/infinite impulse response (FIR/IIR) filters are certain types of filters with great importance in classical signal processing and have simple frequency domain interpretation. We herein aim to bring those concepts to the graph signal processing as GFIR and GIIR filters where G stands for graph representation. As we have shown in Theorem~\ref{T3}, any LSI filter can be written as a polynomial of the graph shift operator with the maximum order of $L_{{\bf A}_{\phi}}-1$. We therefore can define the GFIR and GIIR filters. \label{FIR/IIR Definition}

${\bf Definition}$ : We define a GFIR filter to be ${\bf H}=\sum_{k=0}^{L-1}h_k{{\bf A}_{\phi}}^k$ where $L<L_{{\bf A}_{\phi}}$ and a GIIR filter to be ${\bf H}=\sum_{k=0}^{L-1}h_k{{\bf A}_{\phi}}^k$ where $L=L_{{\bf A}_{\phi}}$.

\subsection{Frequency domain interpretation of filtering} \label{sec-freqfilter}

The derivation of the results presented earlier in this section is in the time domain for time series data or the shift domain for graph signals. One can also describe the filtering process, equivalently, in the frequency domain obtained by the Fourier transform operator. More specifically, if ${\bf y}={\bf H}{\bf x}$ is the filtering operation in the time/shift domain, it can also be represented in the frequency domain as ${\bf y}_\mathcal{F}={\bf H}_\mathcal{F}{\bf x}_\mathcal{F}$, where the subscript $\mathcal{F}$ stands for \emph{Fourier} transformed versions of the corresponding signals/filters. Note that we have used ${\bf y}_\mathcal{F}={\bf V}^{-1}{\bf y}$, ${\bf x}_\mathcal{F}={\bf V}^{-1}{\bf x}$ and ${\bf H}_\mathcal{F}={\bf V}^{-1}{\bf H}{\bf V}$. Note that the filtering process ${\bf H}{\bf x}$ (matrix and vector multiplication) in the shift domain has a simpler representation in the Fourier domain as suggested by ${\bf H}_\mathcal{F}{\bf x}_\mathcal{F}$. To show this, we note that
\begin{align}\label{FrequencyDomainFilter}
{\bf H}_\mathcal{F}&=\!\!\!\!\sum_{k=0}^{L-1}h_k{{\boldsymbol \Lambda}_{\phi}}^k\nonumber \\ &\!\!\!\!\!\!\!\!\!=\!\!\!\begin{pmatrix}
  \sum_{k=0}^{L-1}h_k({{ \lambda}_{\phi_1}})^k &\!\!\!\!\!\!\!\!\!\!\!\!0 &\!\!\!\!\!\!\! \cdots &\!\!\!\!\!\!\!0 \\
  0    &\!\!\!\!\!\!\!\!\!\!\!\!  \sum_{k=0}^{L-1}h_k({{ \lambda}_{\phi_2}})^k&\!\!\!\!\!\!\!\! \cdots &\!\!\!\!\!\!\!\! 0 \\
  \vdots   &\!\!\!\!\!\!\!\!\!\!\!\! \vdots &\!\!\!\!\!\!\!\! \ddots &\!\!\!\!\!\!\!\! \vdots  \\
  0 &\!\!\!\!\!\!\!\!\!\!\!\!0 &\!\!\!\!\!\!\! \cdots &\!\!\!\!\!\!\!\! \sum_{k=0}^{L-1}h_k({{ \lambda}_{\phi_N}})^k 
 \!\!\end{pmatrix}
\end{align}
Therefore, the filtering ${\bf H}_\mathcal{F}{\bf x}_\mathcal{F}$ in the Fourier domain is a simple point-wise multiplication. More specifically, ${\bf y}(m)=\sum_{n=1}^N{\bf H}(m,n){\bf x}(n)$. However,  ${\bf y}_\mathcal{F}(m)={\bf H}_\mathcal{F}(m,m)\times{\bf x}_\mathcal{F}(m)$.

Note that for $\bA_\phi$,
\begin{align}\label{eq-HDTFT}
{\bf H}_\mathcal{F}(m,m) &= \sum_{k=0}^{L-1}h_k({{ \lambda}_{\phi_m}})^k 
= \sum_{k=0}^{L-1}h_k({e^{j\phi_m k}}) \nonumber\\
&= \bH_{DTFT}(\omega|\omega = \phi_m),
\end{align}
and for $\bA_e$, $\phi_m = -\frac{2 \pi m}{N}$, and thus
\begin{align}\label{eq-HDTFT-Ae}
{\bf H}_\mathcal{F}(m,m) &= \sum_{k=0}^{L-1}h_k({{ \lambda}_{e_m}})^k  \nonumber\\
&= \bH_{DTFT}\left(\omega\big |\omega = -\frac{2 \pi m}{N}\right).
\end{align}

Interestingly, with the new set of energy-preserving shift operators $\bA_e$, the GFT coefficient of a LSI filter $\bH$ can be computed using $L$-tap discrete-time Fourier transforms (DTFT). 

\label{filtering}
We further note that, the filtering equation \eqref{FrequencyDomainFilter} is composed of two components; the filter coefficients $h_k$'s and the complex exponentials  $\lambda_{\phi_i}$ that are the eigenvalues of our defined graph shift operator. Since the magnitude of $\lambda_{\phi_i}$'s are one,  $\lambda_{\phi_i}^k$ does not change the magnitude of the $k$-th component of the graph filter ${\bf H}_\mathcal{F}(i,i)$, i.e.,  $h_k (\lambda_{\phi_i})^k$. This suggests that the filter coefficients $h_k$'s are the source of the change of the $k$-th component of ${\bf H}_\mathcal{F}(i,i)$. In contrast, the magnitude of the eigenvalues of arbitrary graph shift operators proposed in the literature are not normalized to one. Thus, the role of the filter coefficients (controlling the signal level/energy) are diminished. It leads to a saturating filtering performance, which is illustrated in the simulation section.

\section{Correlation Functions of Graph Signals and Optimal LSI Graph Filters}\label{sec-optimalfilter}

In this section, we assume that the structure of the graph is known, meaning that the graph adjacency matrix ${\bf A}$ and the related shift operator ${\bf A}_{\phi}$ is given. Assuming that ${\bf A}_{\phi}$ is known, we aim to obtain the structure of the graph LSI filters such that a certain set of constraints are satisfied. We discuss several filter design problems in GSP in the sequel that arise in classical signal processing. 

\subsection{Wiener Filter for Directed Cyclic Graph Data (Time Series)}\label{sec-wienerfilter}
We will first reformulate the time series signal Wiener filter using GSP representation and then generalize it to arbitrary graph signals. Consider the graph representation of the time series data in Fig.~\ref{fig:timeseries}. Assume that $\bf x$ is the graph signal and $\bf y$ is a noisy measurement of the graph signal $\bf x$: $y_i = x_i+ n_i$, where $n_i$ is i.i.d. zero mean white Gaussian noise. A conventional question in denoising problems is to design an LSI filter such that the residual error $\|{\bf H}\bf y-\bf x\|_2^2$ is minimum. Strictly speaking, when $\bf x$ and $\bf y$ are given, we aim to solve the following optimization problem
\begin{align}\label{Opt1}
\min_{\bf H} \;\|{\bf H}\bf y-\bf x\|_2^2.
\end{align}
Since ${\bf H}$ is shift invariant, it can be written as ${\bf H}=\sum_{k=0}^{L-1}h_l{\bf C}^l$, where ${\bf C}$ is defined earlier. Note that the filtered signal can be rewritten as ${\bf H}{\bf y}=\sum_{k=0}^{L-1}h_l{\bf C}^l{\bf y}={\bf B}{\bf h}$ where
\begin{align}\label{BMatirx}
{\bf B}_{N \times L} =[{\bf y} \;\;{\bf C}^1{\bf y} \cdots {\bf C}^{L-1}{\bf y}], \; \;{\bf h}=[h_0\;\; h_1 \cdots h_{L-1}]^T.
\end{align}
Rewrite~\eqref{Opt1} by replacing ${\bf H}{\bf y}$ by its equivalent ${\bf B}{\bf h}$,  
\begin{align}\label{Opt11}
\min_{\bf h} \;\|{\bf B}{\bf h}-\bf x\|_2^2.
\end{align}
Since $L=L_{{\bf A}_{\phi}}\leq N$ is the degree of minimal polynomial of the graph shift matrix, we only consider the cases where $L \leq N$ (Note that when $N=L$, the matrix ${\bf B}$ is full rank, hence the solution to the optimization problem \eqref{Opt11} can be written as ${\bf h}^{\rm o}={\bf B}^{-1} {\bf x}$). If $L<N$, the solution of least square optimization problem \eqref{Opt11} can be obtained by solving 
\begin{align}\label{SolutionWeiner}
{\bf B}^H{\bf B}{\bf h}={\bf B}^H{\bf x},
\end{align}
where $H$ is the Hermitian operator. Such a solution has an interesting interpretation for time series data as will be shown in the sequel. We note that \eqref{SolutionWeiner} can be written as
\begin{align}
&\begin{pmatrix}
{\bf y}^H \\{\bf y}^H{\bf C}^H \\\vdots \\{\bf y}^H({\bf C}^{L-1})^H
\end{pmatrix}[{\bf y} \;\;{\bf C}{\bf y} \cdots {\bf C}^{L-1}{\bf y}]{\bf h}=\begin{pmatrix}
{\bf y}^H \\{\bf y}^H{\bf C}^H \\\vdots \\{\bf y}^H({\bf C}^{L-1})^H
\end{pmatrix}{\bf x} \nonumber
\end{align}
or, equivalently, as \eqref{SimplifiedSolution}.
\begin{figure*}
\begin{align}\label{SimplifiedSolution}
&\begin{pmatrix}
{\bf y}^H{\bf y}&& {\bf y}^H{\bf C}{\bf y}&&\!\!\! \cdots\!\!\! &&{\bf y}^H({\bf C})^{L-1}{\bf y} \\{\bf y}^H{\bf C}^H{\bf y} && {\bf y}^H{\bf C}^H{\bf C}{\bf y} && \!\!\!\cdots \!\!\!&&{\bf y}^H{\bf C}^H({\bf C})^{L-1}{\bf y}  \\ \vdots&& \vdots && \!\!\!\ddots \!\!\!&&\vdots \\{\bf y}^H({\bf C}^{L-1})^H{\bf y} &&{\bf y}^H({\bf C}^{L-1})^H{\bf C}{\bf y}&&\!\!\! \cdots \!\!\!&&{\bf y}^H({\bf C}^{L-1})^H({\bf C}^{L-1}){\bf y}
\end{pmatrix}\!\!{\bf h}\!=\!\!\!\begin{pmatrix}
{\bf y}^H{\bf x} \\{\bf y}^H{\bf C}^H{\bf x} \\\vdots \\{\bf y}^H({\bf C}^{L-1})^H{\bf x}
\end{pmatrix}.
\end{align}
\end{figure*}
We note that the circulant matrix ${\bf C}$ has the unitary property, i.e., $({\bf C}^H)^k{\bf C}^k={\bf I}$, $\forall k$. Moreover, we claim that ${\bf y}^H({\bf C}^l)^H{\bf y}$ is the autocorrelation of the vector $\bf y$ at lag $l$. To show this, we first note that ${\bf C}^l{\bf y}$ is the circularly shifted version of the $\bf y$ by amount $l$. Defining the autocorrelation function of $\bf y$ as 
\begin{equation}\label{eq-autocorr}
R_{\bf yy}(l)\triangleq \sum_{n} y_ny_{n+l}^*,
\end{equation}
one can easily show that 
\begin{align}\label{eq-autocorr1}
{\by}^H({\bC}^l)^H {\bC}^m{\by}&=({\bf C}^{l-m}{\bf y})^H{\bf y} \nonumber\\
&=\sum_{n} y_ny_{n+l-m}^*=R_{\bf yy}(l-m),
\end{align}
where $y_k$ is the $(k\;{\text{mod}\;N})$-th element of the vector $\bf y$ and $^*$ is the conjugation operator. We also define the cross-correlation between the input and output vectors $\bf x$ and $\bf y$, as 
\begin{equation}\label{eq-crosscorr}
r_{\bf xy}(l)\triangleq \sum_{n\in N} x_ny_{n+l}^*={\bf y}^H({\bf C}^l)^H{\bf x}.
\end{equation} 
The linear equations \eqref{SimplifiedSolution} can hence be rewritten as
\begin{equation}\label{eq-WienerHopf1}
{\bf B}^H {\bf Bh} = {\bf R}{\bf _{yy}}{\bf h}={\bf r}{\bf _{xy}},
\end{equation}
i.e.,
\begin{align}\label{eq-WienerHopfMat}
\!\!\!\!\!\!\!\!\begin{pmatrix}
\!\!\!\!R_{\bf yy}(0)&&\!\!\!\!\!\!\!\!\!\! R_{\bf yy}^*(1)\!\!&&\!\!\!\!\!\!\!\! \cdots &&\!\!\!\!\!\!\!\!\!\!R_{\bf yy}^*(L\!\!-\!\!1) \\ \!\!\!\!R_{\bf yy}(1) &&\!\!\!\!\!\! \!\!\!\!R_{\bf yy}(0)\!\! &&\!\!\!\!\!\!\!\! \cdots &&\!\!\!\!\!\!\!\!\!\!R_{\bf yy}^*(L\!\!-\!\!2) \\ \!\!\!\! \vdots&&\!\!\!\!\!\!\!\!\!\! \vdots\!\! &&\!\!\!\!\!\!\!\! \ddots &&\!\!\!\!\!\!\!\! \!\!\vdots \\ \! R_{\bf yy}(L\!\!-\!\!1) &&\!\!\!\!\!\!\!\!\!\! R_{\bf yy}(L\!\!-\!\!2) \!\!&&\!\!\!\!\!\!\!\! \cdots&&\!\!\!\!\!\! \!\!\!\!R_{\bf yy}(0)
\end{pmatrix}\!\!{\bf h}\!\!=\!\!\begin{pmatrix}
\!\!\!\!r_{\bf xy}(0) \\\!\!\!\!r_{\bf xy}(1) \\\!\!\!\! \vdots \\\!r_{\bf xy}(L\!\!-\!\!1)
\end{pmatrix}.
\end{align}
Eq.~\eqref{eq-WienerHopfMat} is indeed the Wiener-Hopf equation. Note that $\bR_{\by\by}$ is a Toeplitz matrix.

Note that the LSI property of graph filters for time series data leads to the Wiener filtering in the classic signal processing. One can also compute the autocorrelation and cross-correlation more efficiently as
\begin{align} 
{\bf R}{\bf _{yy}}(i,j) &= R_{\bf yy}(i-j) \nonumber \\
	&= {\bf y}_\mathcal{F}^{H}({\bf \Lambda}^*)^{i-1}{\bf \Lambda}^{j-1}{\bf y}_\mathcal{F} = \sum_{n=1}^{N}|{\bf y}_\mathcal{F}(n)|^2\lambda_n^{j-i}, \label{eq-Ryy} 
	\\
{\bf r}{\bf _{xy}}(i)&={\bf y}_\mathcal{F}^{H}{({\bf \Lambda}^*)}^{i-1}{\bf x}_\mathcal{F}=\sum_{n=1}^{N}{\bf y}_\mathcal{F}^*(n){\bf x}_\mathcal{F}(n)({\lambda_n^*})^{i-1},\label{eq-Rxy}
\end{align}
which has lower computational complexity than calculating the autocorrelation and the cross-correlation using the definition directly. Note that ${\bf y}_\mathcal{F}={\bf V}^{-1}{\bf y}$ and  ${\bf x}_\mathcal{F}={\bf V}^{-1}{\bf x}$ are the Fourier (DFT) representations of the output and input signals, respectively, and $({\bf \Lambda}^H)^k{\bf \Lambda}^k={\bf I}$.

The optimal LSI filtering \eqref{SimplifiedSolution} also has power spectrum representation, if $L=N$. Note that we can rewrite \eqref{eq-Ryy} in matrix form as
\begin{align}\label{eq-Ryy-W}
\bR_{\by\by} = \bW_\lambda \bY_\mathcal F \bW_\lambda^H,
\end{align}
where $\bW_\lambda$ is a Vandermonde matrix with $W_\lambda(i,j) = (\lambda^*_{j})^{i-1}$, i.e.,
\begin{align}\label{eq-W_lambda}
\bW_\lambda = \begin{pmatrix}
1 			& 1 			& \cdots 	& 	1 \\
\lambda^*_1 	& \lambda^*_2	& \cdots	& 	\lambda^*_{N} \\
\vdots		& \vdots	& \vdots	& \vdots	\\
(\lambda^*_1)^{N-1} 	& (\lambda^*_2)^{N-1}	& \cdots	& 	(\lambda^*_{N})^{N-1} \\
\end{pmatrix}
\end{align}
and
\begin{align}
\bY_\mathcal F = \text{diag}(|{\bf y}_\mathcal{F}(n)|^2, n=1,\cdots,N).
\end{align}
Given $\lambda_i = e^{-j\frac{2\pi(i-1)}{N}}$, it is easy to see that $\bW_\lambda^H = \sqrt{N} \bW$, where $\bW$ is the DFT matrix. 
Then \eqref{eq-WienerHopf1} becomes
\begin{align}
N \bW^H \bY_\mathcal F \bW \bh= {\bf r}{\bf _{xy}},
\end{align}
i.e.,
\begin{align}
\bW \bh = \frac{1}{N}\bY_{\mathcal F}^{-1} \bW {\bf r}{\bf _{xy}}.
\end{align}
Thus
\begin{align}
\bh_{\mathcal F} = \frac{1}{N}\bY_{\mathcal F}^{-1} {\bf r}{\bf _{xy,{\mathcal F}}}.
\end{align}
That is
\begin{align}\label{eq-PowerSpectral}
{\bf h}_\mathcal{F}(i)=\frac{{\bf r}{\bf _{xy}}_{\mathcal{F}}(i)}{N|{\bf y}_\mathcal{F}(i)|^2}.
\end{align}
Here the subscript $\mathcal{F}$ represents the DFT.
This result is consistent with the power spectrum interpretation in the classical signal processing. Note that the property of $\boldsymbol \Lambda_e$ 
is a key for the spectrum representation \eqref{eq-PowerSpectral} to hold.

We will show that a similar structure exists for a general LSI filter for a graph with special structures.

\subsection{ Correlation Functions and Optimal (Wiener) Filtering for Arbitrary Graph Signals} \label{sec-graphwinerfilter}

Arbitrary graph signals may have complex structures, e.g., directed or undirected, weighted or un-weighted, etc. As we defined the shift matrix to be the ${{\bf A}_{\phi}}$, we can construct a general LSI filter as a polynomial of the shift matrix, i.e., ${\bf H}=h({{\bf A}_{\phi}})=\sum_{k=0}^{L-1}h_k{\bf A}_{\phi}^k$, where $h_k$ is the $k$-th filter tap. We also define ${\bf h}\triangleq[h_0 \;\; h_1 \cdots h_{L-1}]^T$ as the vector of the filter. Consider again the denoising problem \eqref{Opt11} given by 
\begin{align}\label{ModifiedDenoising}
\min_{\bf h} \;\|{\bf B}_{\rm new}{\bf h}-\bf x\|_2^2,
\end{align}
where ${\bf B}_{\rm new}=[{\bf y} \;\; {{\bf A}_{\phi}}{\bf y} \cdots {{\bf A}_{\phi}}^{L-1}{\bf y}]$ and the solution to such a problem was obtained earlier. We proved that for the time series graph, the optimal solution is the Wiener filter given by \eqref{SimplifiedSolutionGraph}.
\begin{figure*}
\begin{align}\label{SimplifiedSolutionGraph}
&\begin{pmatrix}
{\bf y}^H{\bf y}&& {\bf y}^H{\bf A}_{\phi}{\bf y}&&\!\!\!\!\!\!\!\!\! \cdots\!\!\!\!\!\!\! &&{\bf y}^H({\bf A}_{\phi})^{L-1}{\bf y} \\{\bf y}^H{\bf A}_{\phi}^H{\bf y} && {\bf y}^H{\bf A}_{\phi}^H{\bf A}_{\phi}{\bf y} && \!\!\!\!\!\!\!\!\! \cdots\!\!\!\!\!\!\!&&{\bf y}^H{\bf A}_{\phi}^H({\bf A}_{\phi})^{L-1}{\bf y}  \\ \vdots&& \vdots && \!\!\!\!\!\!\!\!\! \ddots\!\!\!\!\!\!\!&&\vdots \\{\bf y}^H({\bf A}_{\phi}^{L-1})^H{\bf y} &&{\bf y}^H({\bf A}_{\phi}^{L-1})^H{\bf A}_{\phi}{\bf y}&&\!\!\!\!\!\!\! \cdots\!\!\!\!\!&&{\bf y}^H({\bf A}_{\phi}^{L-1})^H({\bf A}_{\phi}^{L-1}){\bf y}
\end{pmatrix}\!\!{\bf h}\!=\!\!\!\begin{pmatrix}
{\bf y}^H{\bf x} \\{\bf y}^H{\bf A}_{\phi}^H{\bf x} \\ \vdots \\{\bf y}^H({\bf A}_{\phi}^{L-1})^H{\bf x}
\end{pmatrix}.
\end{align}
\end{figure*}
The Wiener filter structure depends on the availability of the autocorrelation function of the output data $\bf y$ and the cross-correlation of the input $\bf x$ and output $\bf y$. For a general graph, the autocorrelation and cross-correlation need to be defined on a particular graph structure.

$\bf Definition$ : We define the autocorrelation function of the signal ${\bf y}$ on an arbitrary graph with lags $l$ and $m$, i.e., the correlation between the shifted version with lag $l$, $\bA_{\phi}^{l}{\by}$, and the shifted version with lag $m$, $\bA_{\phi}^{m}{\by}$, as  
\begin{align}\label{eq-gautocorr}
R_{\bf yy}^{G}(l,m) &\triangleq  {\bf y}^H({\bf A}_{\phi}^{l})^{H} {\bf A}_{\phi}^{m}{\bf y} \nonumber \\
&=\by^H(\bV\bLambda_\phi^l \bV^{-1})^H (\bV\bLambda_\phi^m \bV^{-1}) \by \nonumber \\
&=(\bV^{-1}\by)^H (\bLambda_\phi^*)^l \bV^H \bV \bLambda_\phi^m \bV^{-1} \by. 
\end{align}
We also define  the cross-correlation between the input vectors $\bf x$ and the output vector $\bf y$ at lag $l$, as 
\begin{align}\label{eq-gcrosscorr}
r_{\bf xy}^{G}(l)&\triangleq {\bf y}^H({\bf A}_{\phi}^l)^{H} {\bf x} \nonumber \\
&= \by^H(\bV\bLambda_\phi^l \bV^{-1})^H \bx \nonumber \\
&= (\bV^{-1}\by)^H (\bLambda_\phi^*)^l \bV^H \bx.
\end{align}
As can be seen, the autocorrelation and the cross-correlation on graphs are indeed the correlations between a graph signal and a graph shifted signal, where a shift is defined by the graph shift operator.

The linear equations \eqref{SimplifiedSolutionGraph} can hence be rewritten as
\begin{equation}\label{GraphWienerEQ}
{\bf R}^G_{{\bf yy}}{\bf h}={\bf r}^G_{\bf xy},
\end{equation}
i.e.,
{\small
\begin{align}\label{eq-WienerHopfMatGraph}
&\begin{pmatrix}
R_{\bf yy}^{G}(0,0)		& {R_{\bf yy}^{G}}(0,1)	& \cdots 	&{R_{\bf yy}^{G}}(0,L-1) \\
R_{\bf yy}^{G}(1,0) 	& R_{\bf yy}^{G}(1,1) 	& \cdots & {R_{\bf yy}^{G}}(1,L-2) \\
 \vdots& \vdots & \ddots & \vdots \\
{R_{\bf yy}^{G}}(L-1,0) &{R_{\bf yy}^{G}}(L-1,1)&\cdots& R_{\bf yy}^{G}(L-1,L-1) \\
\end{pmatrix}{\bf h} \nonumber \\
&=\begin{pmatrix}
r_{\bf xy}^{G}(0) \\
r_{\bf xy}^{G}(1) \\
\vdots \\
r_{\bf xy}^{G}(L-1)
\end{pmatrix},
\end{align}
}
which can be considered the Wiener-Hopf equation for graph signals. Note that the autocorrelation matrix ${\bf R}^G_{{\bf yy}}$ is generally not a Toeplitz matrix for a directed graph.

We note that, a typical graph can have a large number of nodes, meaning that the size of the ${\bf A}_{\phi}$ matrix is large. The optimal filtering \eqref{eq-WienerHopfMatGraph} needs the autocorrelation and cross-correlation functions. Since the computational complexity of calculating these functions are high (see the definition of the functions and the large multiplications of matrices), it is desirable to obtain the autocorrelation and cross-correlation functions in \eqref{eq-WienerHopfMatGraph}, using similar equations as \eqref{eq-Ryy} and \eqref{eq-Rxy}. To do so, let us consider ${{\bf A}_{\phi}}={\bf V}{\bf \Lambda}_\phi{\bf V}^{-1}$, where
\begin{align}
{\bf \Lambda}_\phi=\begin{pmatrix}
\lambda_1& 0 & 0 &\cdots & 0\\
0 & \lambda_2 & 0& \cdots& 0\\
\vdots&\vdots & \ddots& \cdots & \vdots\\
0 &0 & \cdots & \lambda_{N-1}& 0
\\
0&0&0&\cdots &\lambda_{N}
\end{pmatrix}.
\end{align}

Now if $\bV$ is unitary, i.e., $\bV^H = \bV^{-1}$, true for all undirected graphs with real-valued $\bA$ and the cyclic graph, then $\bA_{\phi}$ is also unitary and we have
\begin{align} 
 {\bf R}_{\bf yy}^G(i,j)&=R_{\bf yy}^{G}(i-1,j-1) \nonumber \\
 &=\sum_{n=1}^{N}{\bf y}_{\mathcal F}^*(n) {\bf y}_{\mathcal F}(n) (\lambda_n^*)^{i-1}\lambda_n^{j-1} \nonumber \\
 &=\sum_{n=1}^{N}{\bf y}_{\mathcal F}^*(n) {\bf y}_{\mathcal F}(n) (\lambda_n^*)^{i-1}\lambda_n^{j-1} \nonumber \\
 &=\sum_{n=1}^{N}|{\bf y}_{\mathcal F}(n)|^2 \lambda_n^{j-i}, \label{eq-RGyy} \\
{ {\bf r}^G{\bf _{xy}}}(i)&=r_{\bf xy}^{G}(i-1)=\sum_{n=1}^{N}{\bf y}_{\mathcal F}^*(n){\bf x}_{\mathcal F}(n){(\lambda_{n}^*)}^{i-1}.\label{eq-rGxy}
\end{align} 
Also note that, since $\lambda_n^*\lambda_n=1$ for $\bA_{\phi}$, 
\begin{equation}
(\lambda_n^*)^i\lambda_n^{j}=\lambda_n^{j-i} = (\lambda_n^*)^{i-j}.
\end{equation}

Now the autocorrelation matrix ${\bf R}^G_{{\bf yy}}$ is now a Toeplitz matrix and the solution to \eqref{GraphWienerEQ} becomes similar to the Wiener filter for time series data. And we can get similar equations as \eqref{eq-Ryy-W}:
\begin{align}\label{eq-RGyy-W}
\bR^G_{\by\by} = \bW_\lambda \bY_\mathcal F \bW_\lambda^H,
\end{align}
where $\bW_\lambda$ is a Vandermonde matrix with $W_\lambda(i,j) = (\lambda^*_{j})^{i-1}$, defined in \eqref{eq-W_lambda}, and $\bY_\mathcal F = \text{diag}(|{\bf y}_\mathcal{F}(n)|^2, n=1,\cdots,N)$.

Now if we use the new shift operator $\bA_e$, where $\lambda_i = e^{-j\frac{2\pi(i-1)}{N}}$, we also have $\bW_\lambda^H = \sqrt{N} \bW$, where $\bW$ is the DFT matrix . 
Then \eqref{GraphWienerEQ} becomes
\begin{align}
N \bW^H \bY_\mathcal F \bW \bh= {\bf r}{\bf _{xy}}.
\end{align}
Thus
\begin{align}
\bh_{DFT} = \frac{1}{N}\bY_{\mathcal F}^{-1} {\bf r}_{\bx\by,DFT}.
\end{align}
That is
\begin{align}\label{eq-PowerSpectralGraph}
\bh_{DFT}(i)=\frac{{\bf r}_{\bx\by,DFT}(i)}{N|{\bf y}_\mathcal{F}(i)|^2},
\end{align}
where $\bh_{DFT}=\bW \bh$, ${\bf_{xy}}_{DFT}= \bW \bf_{xy}$, and $\by_\mathcal{F} = \bV^{-1} \by$. Note that the subscript $DFT$ represents ordinary DFT and the subscript $\mathcal{F}$ represents the GFT.

{\emph{Remark:}}  The simple closed-form power spectrum solution \eqref{eq-PowerSpectralGraph} of optimal graph Wiener filter only holds under two conditions: (i) The matrix $\bV$ consisting of eigenvectors of graph adjacency matrix is unitary. All undirected graphs satisfy this condition. Some directed graphs such as the cyclic graph also satisfy this condition. Note that the autocorrelation matrix $\bR_{\by\by}$ is a Toeplitz matrix with this condition. (ii) The new shift operator $\bA_e$ is applied. Any other graph shift operator does not lead to such efficient solution for the filtering problem such as \eqref{ModifiedDenoising}. 
This is a major difference between our shift operator \eqref{eq-lambda_phi} and the shift operator proposed by \cite{GraphTranslation,StationaryGraphSignal,GiraultThesis}.

For general directed graphs, the optimal filtering solution can only be obtained by solving the general graph Wiener-Hopf equation \eqref{eq-WienerHopfMatGraph}. \label{label-remark}

Note that a graph can represent an intrinsic network correlation structure that is a sophisticated generalization of a linear time shift correlation structure in classical signal processing. However, it is non-trivial to transfer all the classical signal processing results on a linear time shift to a complex graph correlation structure. The new definition of auto/cross-correlation and graph Wiener filtering are our efforts to develop graph signal processing tools in parallel to classical signal processing, and are also a major contribution of this paper. \label{WienerRemark}

\section{Simulations}\label{sec-simulation}
Fig.~\ref{fig:RandomSensorNet} shows a random deployment of 20 sensor nodes in a two-dimensional area. The coordinates of each node is randomly generated in a square area, where $x$ and $y$ limits are in $[0,1]$. To construct an undirected graph, the $6$ nearest neighbor rule is used -- each node is connected to $6$ of its nearest neighbors.
The graph adjacency matrix of this random sensor network can be obtained from the connection between the nodes. We assign the weight 1 for each connection for simplicity. We plot the eigenvalues of the graph adjacency matrix in Fig.~\ref{fig:EigenvaluesKSparseUndirected}. The eigenvalues of the graph adjacency matrix are real-valued since the graph is undirected.

\begin{figure} [H]
	\centering
	\centerline{\resizebox{!}{6.4cm}{\includegraphics{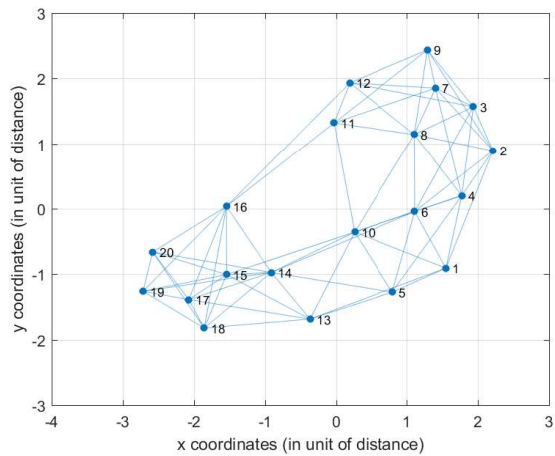}}}
	\caption{Random sensor network with 20 nodes (undirected graph).}
	\label{fig:RandomSensorNet}
\end{figure}

\begin{figure} [H]
	\centering
		\centerline{\resizebox{!}{6.0cm}{\includegraphics{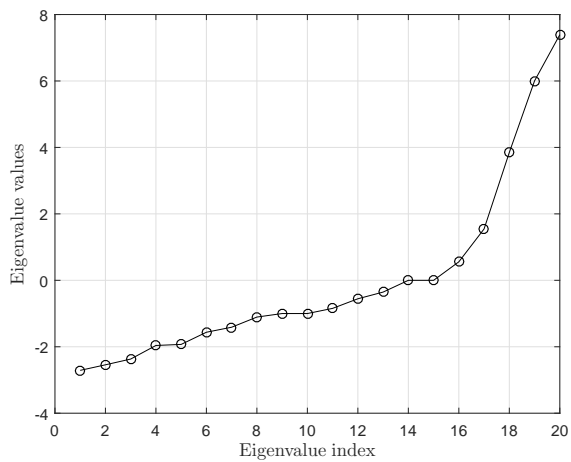}}}
		\caption{Eigenvalues of the graph adjacency matrix (undirected graph).}
		\label{fig:EigenvaluesKSparseUndirected}
\end{figure}

We now define a $K$-sparse graph signal, ${\bf x}$, whose Fourier representation has $K$-nonzero values. That means $\bf x$ has $K$-nonzero component in the Fourier domain, i.e., ${\bf x}_{\cal F}={\bf V}^{-1}{\bf x}$ has $K$-nonzero components. To generate a $K$-sparse graph signal, we randomly generate an $N\times 1$ vector whose first $K$ components are generated from a Gaussian (this can be generated from any distributional function) and the rest of its elements are zero. For instance, we generate a graph signal in the Fourier domain as ${\bf x}=[-0.296 \;-1.497 \;-0.905\;-0.404 \;-0.726\;-0.866\;-0.423\;-0.943\;1.3419\;-0.989\;0\;0\;0\;0\;0\;0\;0\;0\;0\;0]^T$. 
Note that the energies of the graph signal in both shift and Fourier domains are equal to 8.32 ($||{\bf x}||_2^2=||{\bf x}_{\cal F}||_2^2$) for this undirected graph. 

We plot the energy of the shifted versions of this graph signal in Fig.~\ref{fig:ShiftEffect} for two alternatives of graph shift operators; graph adjacency matrix ${\bf A}$ and normalized graph shift matrix (defined in \cite{Moura2}), ${\bf A}_{\rm norm}=\frac{\bf A}{\lambda_{\max}({\bf A})}$, where $\lambda_{\max}({\bf A})$ is the eigenvalue with the largest magnitude. Note that $\bA_{\rm norm}^N \bx = \bV\boldsymbol \Lambda_{\rm norm}^N \bV^{-1}\bx$, where $N$ is the number of shifts. When the graph adjacency matrix $\bA$ is used as shift operator, note that some of its eigenvalues are larger than one. After several shifts, the energy of frequency components corresponding to those eigenvalues larger than one become larger and larger, and those components will dominate the energy of the shifted graph signal. Therefore, the signal energy in the frequency domain increases as the number of shift increases. On the other hand, when the normalized graph shift matrix is used as the shift operator, none of the eigenvalues are larger than one. The energy of frequency components corresponding to those eigenvalues smaller than one are diminishing with increasing number of shifts. The energy of frequency components with eigenvalue equal to one will remain. Therefore, overall the signal energy in the frequency domain decreases as the number of shift increases. 
Note that our defined shift operator ${\bf A}_\phi$ does not change the energy of the graph signal both in shift and Fourier domain for any amounts of shifts.
 \label{label-eigenvalue}

\begin{figure}[b!]
\centering
		\centerline{\resizebox{!}{6.0cm}{\includegraphics{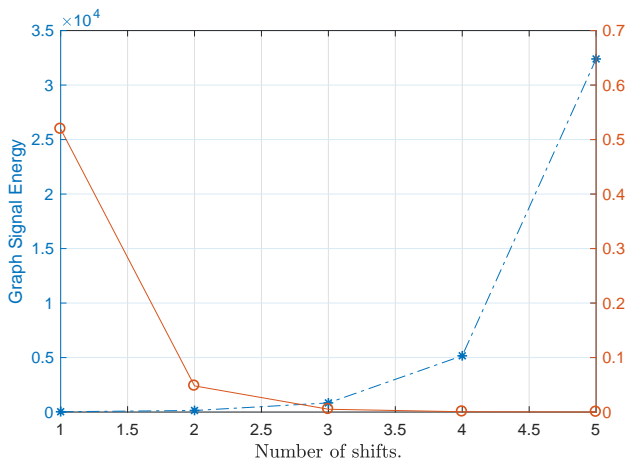}}}
		\caption{Energy of a 10-sparse random signal in the Fourier domain for different shift amounts (the graph structure is undirected). Note that the signal energy using the new shift operator is constant in this case.}
		\label{fig:ShiftEffect}
\end{figure}

Fig.~\ref{fig:DirectedGraph} illustrates a random sensor network with asymmetric graph adjacency matrix, i.e., a directed graph. 
To construct this graph, we randomly sample the graph adjacency matrix of the undirected graph presented in the previous example. We further plotted the magnitude of the eigenvalues of the graph adjacency matrix in Fig.~\ref{fig:EigenvaluesKSparseDirected} (since eigenvalues are generally complex numbers for directed graphs adjacency matrix).

Fig.~\ref{fig:EnergyAmount2} shows the energy of a 10-sparse random signal for different amounts of shift when the graph structure is directed. The signal energy in the Fourier domain is a fixed number, i.e., 8.32. Note that we use our defined graph shift operator in this example to illustrate the difference between the energy of the shifted signal in shift and Fourier domains. To be consistent, we use the same graph signal in the undirected graph example presented earlier in this section. Fig.~\ref{fig:EnergyAmount2} demonstrates that the energy of the graph signal in the Fourier domain is a fix number while in the shift domain is oscillating. Since Fourier operator for a general directed graph is not unitary, the energy of the graph signal in the shift and Fourier domains are different. We can show the oscillating behavior of energy of the shifted graph signal ${\bf x}_k$ in the shift domain as
\begin{align} \label{ProofEnergy}
\|{\bf x}_k\|_2^2&=\|{\bf A}_{e}^k{\bf x}\|_2^2=\|{\bf V}{\boldsymbol \Lambda}_{e}^k{\bf x}_{\cal F}\|_2^2
= \left\|{ \sum_{l=1}^{N}}{\lambda}_{e_l}^k{\bf x}_{\cal F}(l){\bf v}_l \right\|_2^2 \nonumber \\
&= \left\|{ \sum_{l=1}^{N}}{\lambda}_{e_l}^k\hat{\bf v}_l\right\|_2^2 
={ \sum_{l=1}^{N}}{ \sum_{m=1}^{N}}(e^{-j\frac{2 \pi (m-l)k}{N}})\hat{\bf v}_l^H\hat{\bf v}_m,
\end{align}
where $\hat{\bf v}_l = {\bf x}_{\cal F}(l){\bf v}_l$. Note that, the energy of the shifted graph signal in the shift domain is a linear combination of constant terms $\hat{\bf v}_l^H\hat{\bf v}_m$ where the weights are complex-valued and functions of $k$ (the number of shifts). Changing $k$ leads to the variability of the energy in the shift domain. We can apply the frame theory results on frame bounds \cite{FrameTheory} to obtain a lower and an upper bound for the energy in the shift domain, as shown in \eqref{eq-frame1}.
We further note that our shift operator ${\bf A}_e$, although it does not preserve the energy in the shift domain for all amounts of shifts, it preserves the energy in the shift domain for integer multiples of $N$ since ${\bf A}_{e}^{lN}=({\bf A}_{e}^{N})^l=({\bf I})^l={\bf I}$.

\begin{figure}[htbp]
	\centering
	\centerline{\resizebox{!}{6.6cm}{\includegraphics{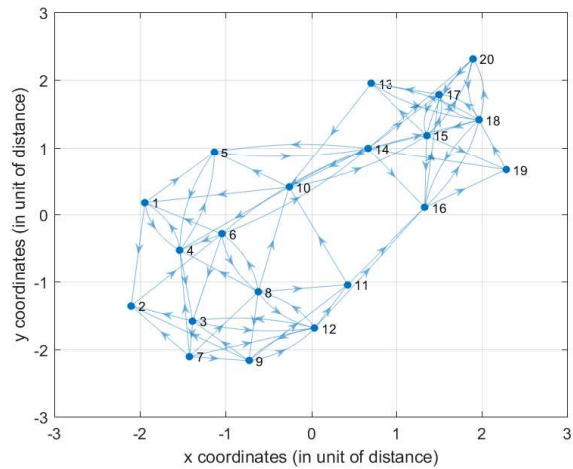}}}
	\caption{Directed sensor network with 20 nodes.}
	\label{fig:DirectedGraph}
\end{figure}

\begin{figure}[htbp]
	\centering
	\centerline{\resizebox{!}{6.2cm}{\includegraphics{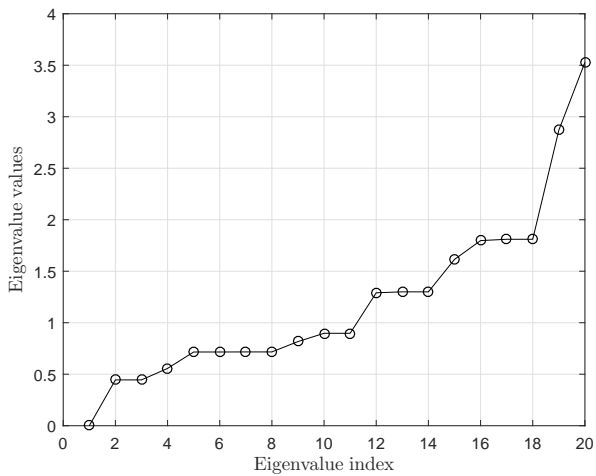}}}
	\caption{Magnitude of the eigenvalues of the graph adjacency matrix (directed graph).}
	\label{fig:EigenvaluesKSparseDirected}
\end{figure}

\begin{figure}[htbp]
	\centering
	\centerline{\resizebox{!}{6.2cm}{\includegraphics{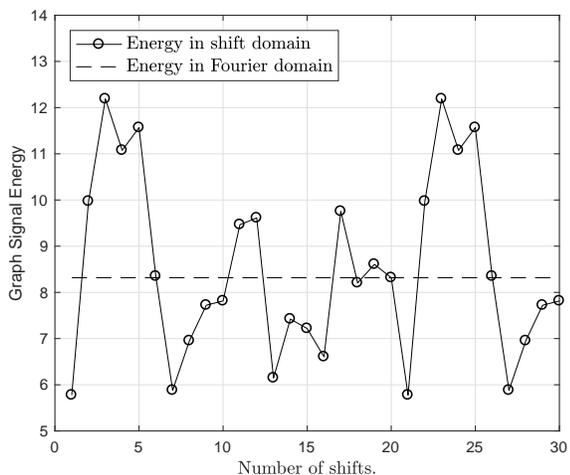}}}
	\caption{Energy of a 10-sparse random signal in shift and Fourier domain using our defined graph shift operator (directed graph).}
	\label{fig:EnergyAmount2}
\end{figure}

Figs.~\ref{fig:ComparisonVar1} and~\ref{fig:ComparisonVar100} show the percentage of the reconstruction error for the Wiener filtering problem \eqref{ModifiedDenoising}, for two different sets of noisy measurements namely; random i.i.d. Gaussian noises with variances $\sigma_n^2 = 1$ and $\sigma_n^2 = 100$. The dataset considered in this example contains the average temperatures of 40 US states capitals. The graph signal ${\bf x}_t$ is a $40 \times 1$ vector and $t \in \{1,2,\cdots,M\}$ for a horizon of $M=264$ consecutive days in 2015, and we consider the noisy measurements of those graph signals as ${\bf y}_t={\bf x}_t+{\bf n}_t$. For the two scenarios considered here, we define the ratio of the average  power of signals of all nodes and the noise variance as \emph{signal-to-noise-ratio} (SNR). The SNR of the first scenario (where the noise variance $\sigma_n^2 = 1$) and the second scenario (where the noise variance $\sigma_n^2 = 100$) are $35.66\; {\rm dB}$ and $15.66\; {\rm dB}$, respectively.

The average percentage of the reconstruction error is defined as $\frac{1}{M}\sum_{t=1}^M\frac{||{\bf x}_t-{\bf H}_t^{\rm o}{\bf y}_t||}{||{\bf x}_t||}$, where ${\bf H}_t^{\rm o}$ is the optimal graph filter obtained by the optimization problem \eqref{ModifiedDenoising}. We consider three different approaches to construct the graph, i.e., the $k$-nearest neighbor method with $k=9$ \cite{Moura2}, the exponentially distance-based weighted graph adjacency considered by \cite{Moura2}, and the empirical covariance-based graph construction introduced in \cite{ChaZhang2}. In all cases, the reconstruction error of Wiener filtering using our shift operator is much lower than the traditional adjacency-based operators. 

The source of errors comes from the following facts. For large values of $L$ (the number of filter coefficients), the polynomial representation of the LSI filter ${\bf H}=\sum_{l=0}^L h_l{\bf A}^l$ dominates only the frequency components that have the largest eigenvalues of ${\bf A}$. More specifically, adding further coefficients to the filter, does not improve the performance of the Wiener filtering since the frequency components corresponding to small eigenvalues are indistinguishable from the noise. This effect can be explained by Fig.~\ref{fig:EigenvaluesOfGraphAdjacencyWeatherData} where we plot the eigenvalues of the graph adjacency matrix of the weather graph. We can observe that the magnitude of eigenvalues of the graph adjacency matrix are not equal to one. Therefore, the performance of the filtering is dominated by the largest eigenvalue(s) of $\bf A$, i.e., adding further coefficients leads to diminishing all eigenvalues (and the corresponding Fourier basis) less than $\lambda_{\max} ({\bf A})$; thereby saturating the performance of the optimal filtering.

We further observe that after almost adding 15 taps (10 taps) when the noise variance is $\sigma_n^2 = 1$, the performance of the adjacency-based approaches is saturated. Note that this number is specific to this example and for different simulation setups this saturating effect may be observed at a different location depending on the magnitudes of the eigenvalues of the graph adjacency matrix $\bf A$. However, if we use our new graph shift operator, adding more coefficients will improve the performance of the filtering operation as it allows us to use more coefficients to reduce the reconstruction error while keeping all frequency components of the graph. Moreover, the cyclic property of our defined graph shift operator, i.e., ${\bf A}_e^N={\bf I}$ ensures that an LSI filter that uses ${\bf A}_e$ as a shift operator needs at most $N$ filter taps to achieve its best performance, while there is no limit on the number of filter taps if other shift operators are used. This signifies the importance of our defined shift operator which offers low complexity (a need for less filter taps to achieve a predefined performance level) and high accuracy. Furthermore, Figs.~\ref{fig:ComparisonVar1} and~\ref{fig:ComparisonVar100} also show that, when the noise variance is larger, the performance of Wiener filtering will be affected and the percentage of the reconstruction error becomes larger, as we expect. One more important note is that for the case where the noise variance  $\sigma_n^2 = 100$, a noise level comparable to the level of the graph signal, we observe that the performance of Wiener filtering for the traditional adjacency-based operators saturates at earlier number of filter taps than the case where the noise variance is  $\sigma_n^2 = 1$. This shows that the performance of Wiener filtering is noise-level dependent for the traditional adjacency-based operators, while the performance of the Wiener filtering using the new shift operator is not noise level dependent.

\begin{figure}[H]
	\centering
	\centerline{\resizebox{!}{6.2cm}{\includegraphics{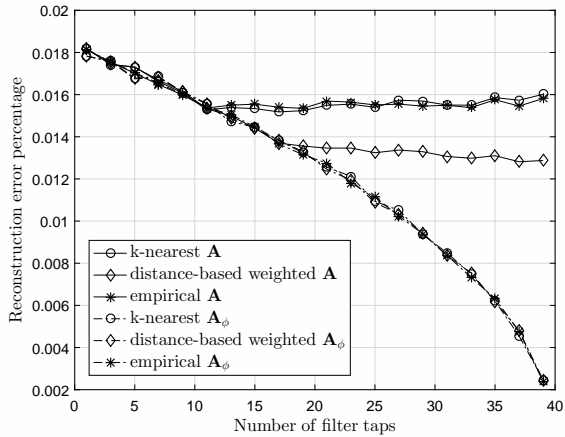}}}
	\caption{Average reconstruction error for the Wiener filtering problem \eqref{ModifiedDenoising}, where the SNR is $35.66 \; {\rm dB}$ (the noise variance $\sigma_n^2 = 1$).}
	\label{fig:ComparisonVar1}
\end{figure}

\vspace{-0.1in}
\begin{figure}[H]
	\centerline{\resizebox{!}{6.2cm}{\includegraphics{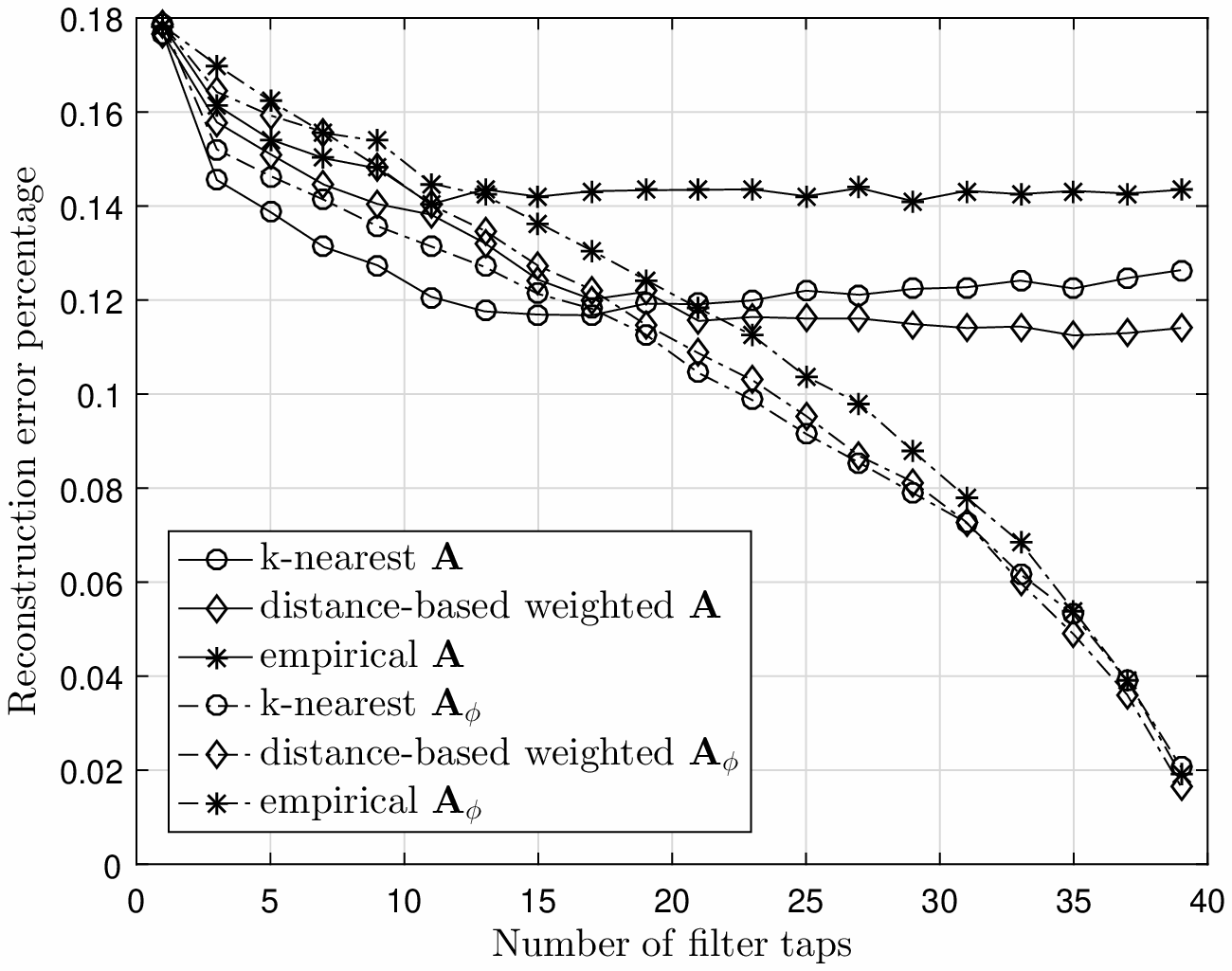}}}
	\caption{Average reconstruction error for the Wiener filtering problem \eqref{ModifiedDenoising}, where the SNR is $15.66 \; {\rm dB}$ (the noise variance $\sigma_n^2 = 100$).}
	\label{fig:ComparisonVar100}
\end{figure}

\vspace{-0.1in}
\begin{figure}[H]
	\centering
	\centerline{\resizebox{!}{6cm}{\includegraphics{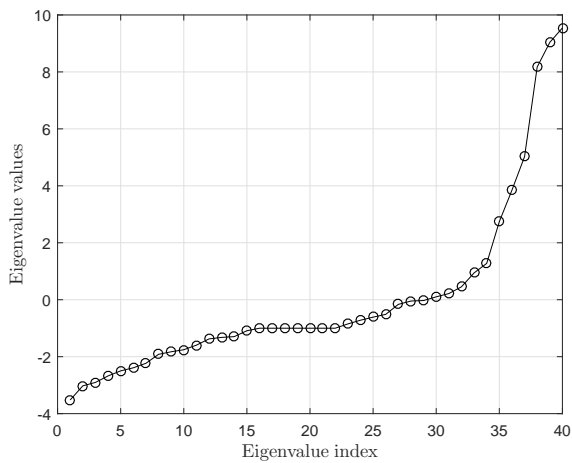}}}
	\caption{Eigenvalues of the graph adjacency matrix of the weather network.}
	\label{fig:EigenvaluesOfGraphAdjacencyWeatherData}
\end{figure}

\section{Conclusions}

In this paper, we define a new set of shift operators for graph signals satisfying the energy-preserving properties as in classical signal processing by resetting the eigenvalues of the adjacency matrix or the Laplacian matrix flexibly. We show that such shift operators preserve the energy content of the graph signal in both shift and frequency domains. We further investigate the properties of LSI graph filters and show that any LSI filter can be written as a polynomial of the graph shift operator. We then categorize the LSI filters as GFIR and GIIR filters, similar to the classical FIR and IIR filters and obtain explicit forms for such filters. Based on these energy-preserving shift operators, we further define autocorrelation and cross-correlation functions of signals on the graph. We then obtain the structure of the optimal LSI graph filters, i.e., Wiener filtering, through the construction of the Wiener-Hopf equation on graph. We show that only with the proposed graph shift operator, we can possibly obtain the efficient spectra analysis and frequency domain filtering in parallel with those in classical signal processing. 
Several illustrative simulations are presented to validate the performance of designed optimal LSI filters.

Our new shift operator based GSP framework enables the signal analysis along a correlation structure defined by a graph shift manifold as opposed to classical signal processing operating on the assumption of the correlation structure with a linear time shift manifold.

\section*{Acknowledgment}
We would like to thank Dr. Jos\'e Moura and Dr. Antonio Ortega for discussions and feedbacks of many concepts and ideas in the early stage of manuscript development.

\setcounter{equation}{0}
\appendix
\numberwithin{equation}{section}

\subsection{Proof of Theorem~\ref{TheoremReversePolynomial}} \label{ReversePolynomial}
Let us assume that ${\bf A}_{\phi}$ can be represented as a polynomial of $\bf A$, i.e.,
\begin{align}
{\bf A}_{\phi}=\sum_{k=0}^{N-1}g_k{{\bf A}}^k.
\end{align}
Since the Fourier basis of $\bf A$ and ${\bf A}_\phi$ are the same by the definition, we can diagonalize the two operator by multiplying $\bf V$ and ${\bf V}^{-1}$ to the two operator form right and left, respectively. Therefore, we can write
\begin{align}
{\boldsymbol \Lambda}_{\phi}=\sum_{k=0}^{N-1}g_k{{\boldsymbol \Lambda}}^k.
\end{align}
This equation can also be written as a linear matrix equation as
\begin{align}
\underset{{\boldsymbol {Z}}}{\underbrace{\begin{pmatrix}
		1& \lambda_1 & \lambda_1^2 &\cdots & \lambda_1^{N-1}\\
		1& \lambda_2 & \lambda_2^2 &\cdots & \lambda_2^{N-1}\\
		\vdots &\vdots & \vdots & \ddots& \vdots\\
		1& \lambda_N & \lambda_N^2 &\cdots & \lambda_N^{N-1}
		\end{pmatrix}}}\begin{pmatrix} g_0 \\g_1 \\ \vdots \\
g_{N-1} \end{pmatrix}=\begin{pmatrix} \lambda_{\phi_1} \\\lambda_{\phi_2} \\ \vdots \\
\lambda_{\phi_N} \end{pmatrix},
\end{align}
or more compactly, as
\begin{align}\label{FilterDesign}
{\bf Z}{\bf g}={\boldsymbol \lambda}_\phi
\end{align}
where ${\bf g}=[g_0 \; g_1 \cdots \; g_{N-1}]^T$ and ${\boldsymbol \lambda}=[\lambda_{\phi_1} \; \lambda_{\phi_2}\; \cdots \; \lambda_{\phi_N}]^T$. We note that, $\bf Z$ is the well-known Vandermonde matrix and has full-rank iff $\lambda_{i} \neq \lambda_{j}$, for $i$, $j \in \{ 1,2,\cdots, N\}$. Therefore, if $\bf Z$ is full-rank, then the linear equation \eqref{FilterDesign} has a unique solution ${\bf g}={\bf Z}^{-1}{\boldsymbol \lambda}$ (We emphasize that, there exists efficient recursive algorithms for obtaining the inverse of Vandermonde matrices with low computational complexity). This completes the proof.

\subsection{Proof of Theorem~\ref{T3}} \label{Theorem3}
The minimal polynomial of an $N \times N$ matrix ${\bf A}_{\phi}$ is defined by a polynomial with minimum degree that satisfies $p({\bf A}_{\phi})=\sum_{i=0}^{L_{{\bf A}_{\phi}} }\alpha_i {\bf A}_{\phi}^i={\bf 0}_{N \times N}$. Note that for the degenerate case where ${\bf A}_\phi$ is not full-rank, ${L_{{\bf A}_{\phi}} } \neq N$. Without loss of generality, we assume that $\alpha_{L_{{\bf A}_{\phi}}}=1$. In order to obtain the other $\alpha_i$'s, we first note that $\sum_{i=0}^{L_{{\bf A}_{\phi}}-1 }\alpha_i {\bf A}_{\phi}^i=-{\bf A}_{\phi}^{L_{{\bf A}_{\phi}} }$. We then write the diagonalized version of this equation as
\begin{align}
\sum_{i=0}^{L_{{\bf A}_{\phi}}-1 }\alpha_i {\boldsymbol \Lambda}_{\phi}^i=-{\boldsymbol \Lambda}_{\phi}^{L_{{\bf A}_{\phi}} }.
\end{align}
One can write this equation in a matrix form as
\begin{align}
\underset{{\boldsymbol {\tilde{Z}}}}{\underbrace{\begin{pmatrix}
		1& \!\!\!\!\lambda_{\phi_1} &\!\!\!\!\!\! \lambda_{\phi_1}^2 &\!\!\!\!\!\!\cdots & \lambda_{\phi_1}^{L_{{\bf A}_{\phi}} -1}\\
		1&\!\!\!\! \lambda_{\phi_2} &\!\!\!\!\!\! \lambda_{\phi_2}^2 &\!\!\!\!\!\!\cdots & \lambda_{\phi_2}^{L_{{\bf A}_{\phi}} -1}\\
		\vdots &\!\!\!\!\vdots &\!\!\!\!\!\! \vdots &\!\!\!\!\!\! \ddots& \vdots\\
		1&\!\!\!\! \lambda_{\phi_{L_{{\bf A}_{\phi}} }} &\!\!\!\!\!\! \lambda_{\phi_{L_{{\bf A}_{\phi}} }}^2 &\!\!\!\!\!\!\cdots & \lambda_{\phi{L_{{\bf A}_{\phi}}}} ^{{L_{{\bf A}_{\phi}}-1}}
		\end{pmatrix}}}\!\!\!\begin{pmatrix} \alpha_0 \\\alpha_1 \\ \vdots \\
\alpha_{{{L_{{\bf A}_{\phi}}-1}}} \end{pmatrix}\!\!=-\!\!\begin{pmatrix} \lambda_{\phi_1}^{L_{{\bf A}_{\phi}}} \\ \lambda_{\phi_2}^{L_{{\bf A}_{\phi}}} \\ \vdots \\
\lambda_{\phi_{{L_{{\bf A}_{\phi}}-1}}}^{L_{{\bf A}_{\phi}}} \end{pmatrix},
\end{align}
or in a more compact form $\tilde{\bZ} {\boldsymbol \alpha}=\tilde{\boldsymbol \lambda}_{\phi}$, where ${\boldsymbol \alpha}=[\alpha_0 \; \alpha_1 \cdots \; \alpha_{N-1}]^T$ and $\tilde{\boldsymbol \lambda}_{\phi}=-[\lambda_{\phi_1}^{L_{{\bf A}_{\phi}}} \; \lambda_{\phi_2}^{L_{{\bf A}_{\phi}}}\; \cdots \; \lambda_{\phi_{L_{{\bf A}_{\phi}}}}^{L_{{\bf A}_{\phi}}}]^T$. The solution to the $\alpha$'s can be easily obtained as $ {\boldsymbol \alpha}=\tilde{\bZ}^{-1}\tilde{\boldsymbol \lambda}_{\phi}$. 

We also note that for the special ${\bf A}_e$, we choose $\lambda_{e_k}=e^{-j \frac{2 \pi (k-1)}{N}}$, $\forall k \in \{1,\cdots, N\}$, if ${\bf A}$ is full-rank, otherwise we choose $\lambda_{e_k}=e^{-j \frac{2 \pi (k-1)}{L_{{\bf A}_{e}}}}$. We therefore obtain the closed-form solution for the ${\boldsymbol \alpha}$ as
$\alpha_k=-\sum_{l=1}^N e^{j \frac{2 \pi (k-1)}{L_{{\bf A}_{e}}}} \lambda_{e_l}^{L_{{\bf A}_{e}}}$. That is $\alpha_0 = -1$ and $\alpha_l=0$ for $l \ne 0$. This is also straightforward since $\lambda_{e_k}^{L_{{\bf A}_{e}}} =1, \forall k$, i.e., i.e., $\bA_e^{L_{\bA_e}}= \bI$.

Once we obtain $\alpha_k$, we can write $ {\bf A}_{\phi}^{L_{{\bf A}_{\phi}}}=-\sum_{i=0}^{L_{{\bf A}_{\phi}}-1 }\alpha_i {\bf A}_{\phi}^i$. 
 
Now consider an LSI filter with length $L=L_{{\bf A}_{\phi}}+1$ as ${\bf H} =\sum_{k=0}^{L}h_k{{\bf A}_{\phi}}^k$. Using the results obtained earlier, we can rewrite this LSI filter as
\begin{align}
{\bf H} &=\sum_{k=0}^{L_{{\bf A}_{\phi}}}h_k{{\bf A}_{\phi}}^k 
=\sum_{k=0}^{L_{{\bf A}_{\phi}}-1}h_k{{\bf A}_{\phi}}^k+h_{L_{{\bf A}_{\phi}}}{{\bf A}_{\phi}}^{L_{{\bf A}_{\phi}}}\nonumber \\ 
&=\sum_{k=0}^{L_{{\bf A}_{\phi}}-1}\!\!h_k{{\bf A}_{\phi}}^k+h_{L_{{\bf A}_{\phi}}}(-\!\!\!\sum_{k=0}^{L_{{\bf A}_{\phi}}-1 }\!\!\alpha_k {\bf A}_{\phi}^k)\nonumber \\ &=\sum_{k=0}^{L_{{\bf A}_{\phi}}-1}(h_k-h_{L_{{\bf A}_{\phi}}}\alpha_k ){{\bf A}_{\phi}}^k 
=\sum_{k=0}^{L_{{\bf A}_{\phi}}-1}\breve{h}_k{\bf A}_{\phi}^k.
\end{align}
This procedure can be repeated for any arbitrary $L> L_{{\bf A}_{\phi}}+1$. Therefore, we only consider LSI graph filters with $L_{{\bf A}_{\phi}}$ filter taps and we interchangeably use $L$ instead of $L_{{\bf A}_{\phi}}$.  

We finally note that, for the special case ${\bf A}_e^{L_{{\bf A}_{\phi}}}={\bf I}$, meaning that if $k>{L_{{\bf A}_{\phi}}}$, then ${\bf A}_e^{k}={\bf A}_e^{m}$ where $m=(k \mod {L_{{\bf A}_{\phi}}})$.

\bibliographystyle{IEEEtran}
\bibliography{IEEEabrv,reference}

\begin{IEEEbiography}[{\includegraphics[width=1in,height=1.25in,clip,keepaspectratio]{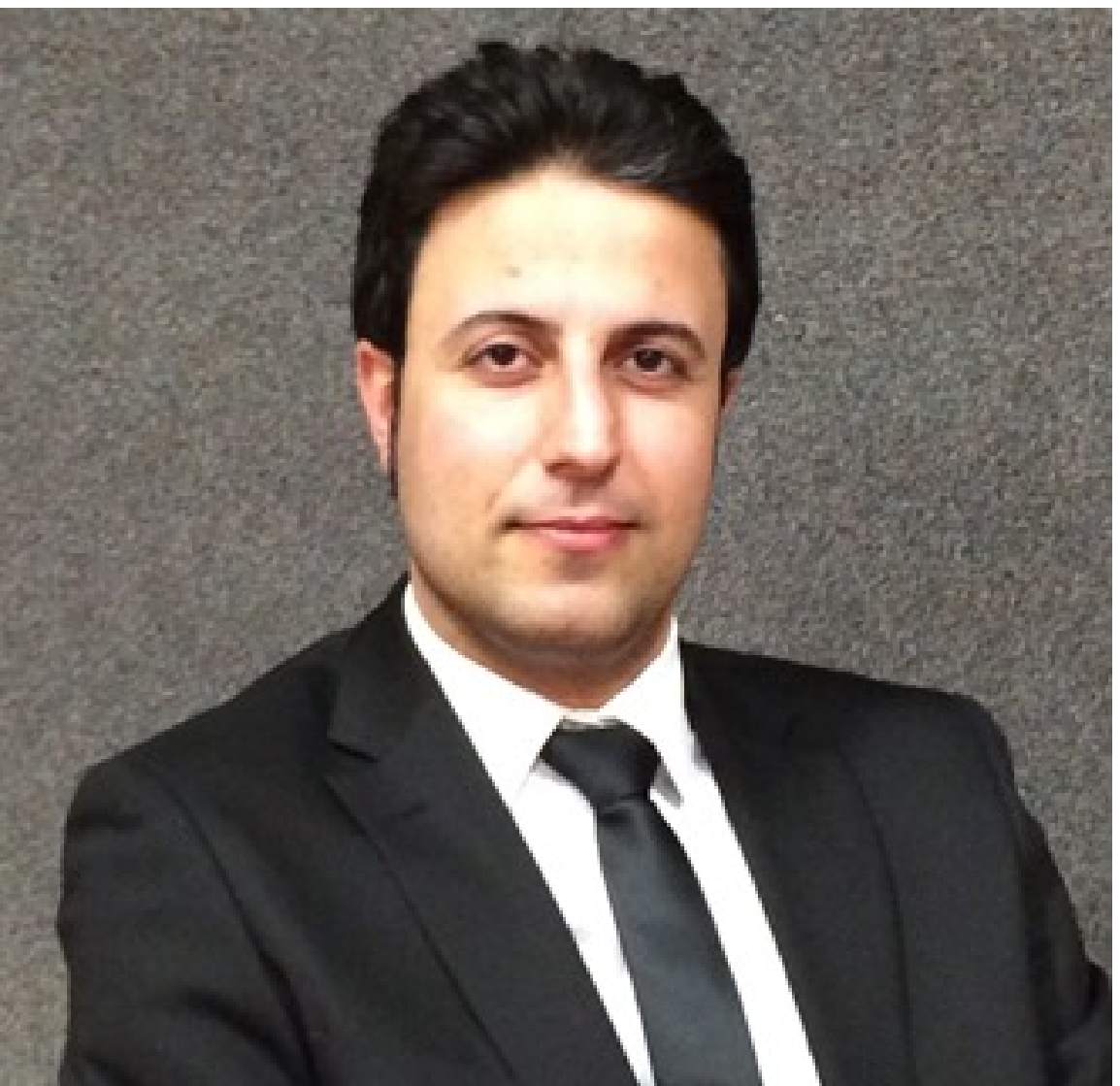}}] {Adnan Gavili} was born in Sanandaj, Kurdistan, Iran. He received the B.Sc., M.Sc. and PhD degrees in electrical engineering from Iran University of Science and Technology (IUST), Sharif University of Technology (SUT) and University of Ontario Institute of Technology (UOIT) in 2009, 2011 and 2015, respectively. From September 2015 to August 2016, he has worked on Graph Signal Processing (GSP) with Dr. Xiao-Ping Zhang as a post-doctoral research fellow at Ryerson University. Since 2013, he has served as a reviewer for \textit{IEEE Transactions on Signal Processing, IEEE Transactions on Wireless Communications} and \textit{IEEE Signal Processing Letters}. His research interests include GSP, classical signal processing, cooperative green communications, cognitive radio networks and signal processing applications in finance.
\end{IEEEbiography}

\begin{IEEEbiography}[{\includegraphics[width=1in,height=1.25in,clip,keepaspectratio]{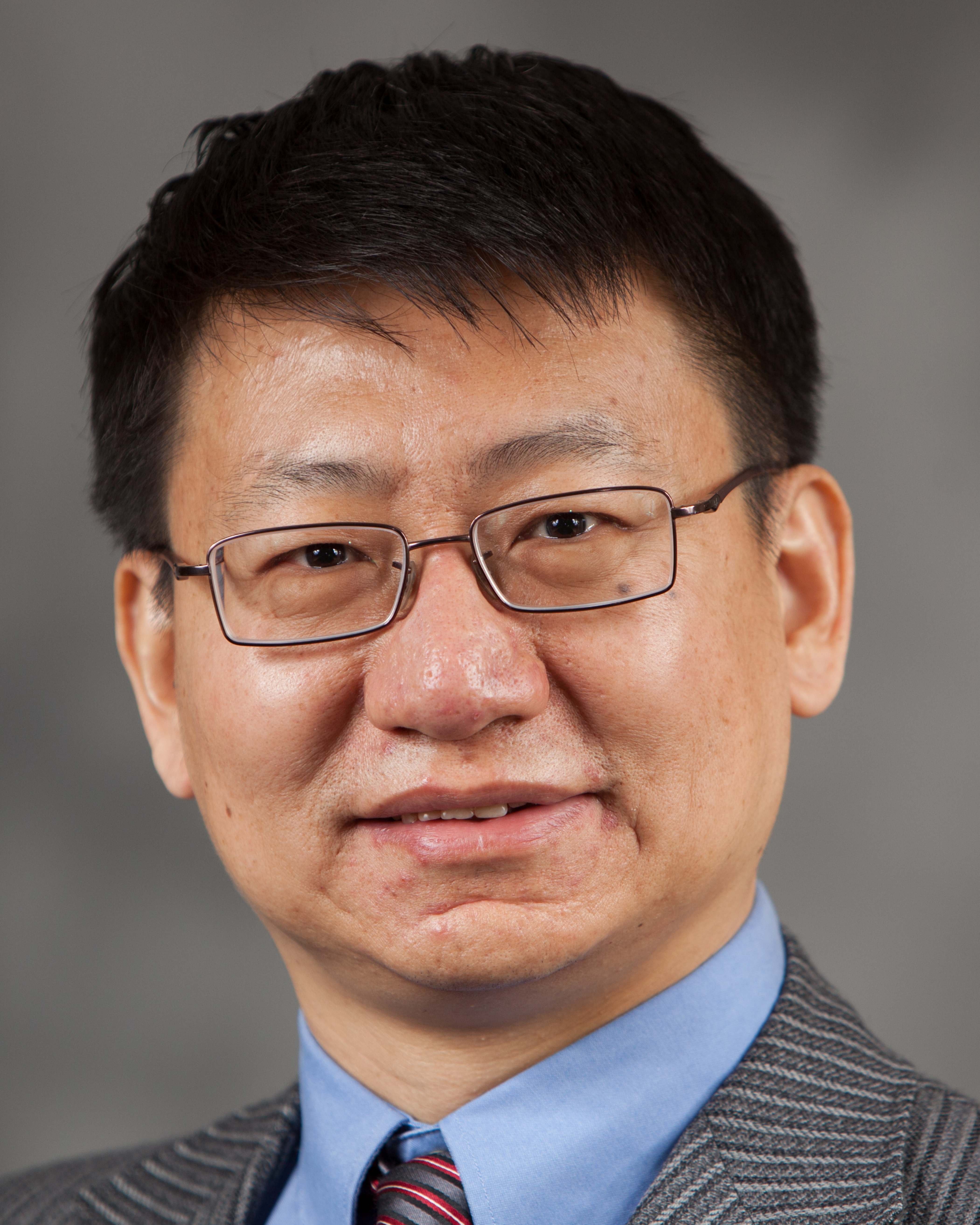}}]
{Xiao-Ping Zhang (M'97, SM'02)} received B.S. and Ph.D. degrees from Tsinghua University, in 1992 and 1996, respectively, both in Electronic Engineering. He holds an MBA in Finance, Economics and Entrepreneurship with Honors from the University of Chicago Booth School of Business, Chicago, IL. 

Since Fall 2000, he has been with the Department of Electrical and Computer Engineering, Ryerson University, where he is now Professor, Director of Communication and Signal Processing Applications Laboratory (CASPAL). He has served as Program Director of Graduate Studies. He is cross appointed to the Finance Department at the Ted Rogers School of Management at Ryerson University. His research interests include statistical signal processing, image and multimedia content analysis, sensor networks and electronic systems, machine learning, and applications in bioinformatics, finance, and marketing. He is a frequent consultant for biotech companies and investment firms. He is cofounder and CEO for EidoSearch, an Ontario based company offering a content-based search and analysis engine for financial data. 

Dr. Zhang is a registered Professional Engineer in Ontario, Canada, and a member of Beta Gamma Sigma Honor Society. He is the general co-chair for ICASSP2021. He is the general co-chair for 2017 GlobalSIP Symposium on Signal and Information Processing for Finance and Business. He is an elected member of ICME steering committee. He is the general chair for MMSP'15. He is the publicity chair for ICME'06 and program chair for ICIC'05 and ICIC'10. He served as guest editor for Multimedia Tools and Applications, and the International Journal of Semantic Computing. He is a tutorial speaker in ACMMM2011, ISCAS2013, ICIP2013, ICASSP2014, IJCNN2017. He is a Senior Area Editor for \textit{IEEE Transactions on Signal Processing}. He is/was an Associate Editor for \textit{IEEE Transactions on Image Processing, IEEE Transactions on Multimedia, IEEE Transactions on Circuits and Systems for Video Technology, IEEE Transactions on Signal Processing,} and \textit{IEEE Signal Processing Letters}. 
 
\end{IEEEbiography}

\end{document}